\newtheorem{proposition}{Proposition}[section]
\newtheorem{lemma}[proposition]{Lemma}
\newtheorem{corollary}[proposition]{Corollary}
\newtheorem{theorem}[proposition]{Theorem}
\theoremstyle{definition}
\newtheorem{definition}[proposition]{Definition}
\newtheorem{example}[proposition]{Example}
\newtheorem{examples}[proposition]{Examples}
\newtheorem{remark}[proposition]{Remark}
\newtheorem{remarks}[proposition]{Remarks}
\newcommand{\thlabel}[1]{\label{th:#1}}
\newcommand{\thref}[1]{Theorem~\ref{th:#1}}
\newcommand{\selabel}[1]{\label{se:#1}}
\newcommand{\seref}[1]{Section~\ref{se:#1}}
\newcommand{\lelabel}[1]{\label{le:#1}}
\newcommand{\leref}[1]{Lemma~\ref{le:#1}}
\newcommand{\prlabel}[1]{\label{pr:#1}}
\newcommand{\prref}[1]{Proposition~\ref{pr:#1}}
\newcommand{\colabel}[1]{\label{co:#1}}
\newcommand{\coref}[1]{Corollary~\ref{co:#1}}
\newcommand{\exlabel}[1]{\label{ex:#1}}
\newcommand{\exref}[1]{Example~\ref{ex:#1}}
\newcommand{\delabel}[1]{\label{de:#1}}
\newcommand{\deref}[1]{Definition~\ref{de:#1}}
\newcommand{\eqlabel}[1]{\label{eq:#1}}
\newcommand{\equref}[1]{(\ref{eq:#1})}
\newcommand{\Hom}{{\rm Hom}}
\def\ot{\otimes}
\newcommand{\Cc}{\mathcal{C}}
\newcommand{\Dd}{\mathcal{D}}
\newcommand{\Mm}{\mathcal{M}}
\def\*C{{}^*\hspace*{-1pt}{\Cc}}
\def\text#1{{\rm {\rm #1}}}
\def\ul{\underline}
\def\dul#1{\underline{\underline{#1}}}
\def\Nat{\dul{\rm Nat}}
\begin{document}

\title[Four problems regarding
representable functors]{Four problems regarding representable
functors}

\author{G. Militaru}\thanks{This work was supported by
CNCSIS grant 24/28.09.07 of PN II "Groups, quantum groups, corings
and representation theory". }
\address{Faculty of Mathematics and Computer Science, University of Bucharest,
Str. Academiei 14, RO-010014 Bucharest 1, Romania}
\email{gigel.militaru@fmi.unibuc.ro and gigel.militaru@gmail.com}

\subjclass[2010]{16T15, 18A22} \keywords{corings, representable,
separable and Frobenius functors}

\begin{abstract} Let $R$, $S$
be two rings, $C$ an $R$-coring and ${}_{R}^C{\mathcal M}$ the
category of left $C$-comodules. The category ${\bf Rep}\, (
{}_{R}^C{\mathcal M}, {}_{S}{\mathcal M} )$ of all representable
functors ${}_{R}^C{\mathcal M} \to {}_{S}{\mathcal M}$ is shown to
be equivalent to the opposite of the category ${}_{R}^C{\mathcal
M}_S$. For $U$ an $(S,R)$-bimodule we give necessary and
sufficient conditions for the induction functor $U\otimes_R - :
{}_{R}^C\mathcal{M} \to {}_{S}\mathcal{M}$ to be: a representable
functor, an equivalence of categories, a separable or a Frobenius
functor. The latter results generalize and unify the classical
theorems of Morita for categories of modules over rings and the
more recent theorems obtained by Brezinski, Caenepeel et al. for
categories of comodules over corings.
\end{abstract}
\date{}
\maketitle

\section*{Introduction}
Let $\mathcal{C}$ be a category and $\mathcal{V}$ a variety of
algebras in the sense of universal algebras. A functor $F:
\mathcal{C} \to \mathcal{V} $ is called representable
\cite{bergman} if $ \gamma \circ F : \mathcal{C} \to {\mathcal
Set}$ is representable in the classical sense, where $\gamma :
\mathcal{V} \to {\mathcal Set}$ is the forgetful functor. Four
general problems concerning representable functors have been
identified:

\textit{Problem A: Describe the category }${\bf Rep}\,
(\mathcal{C}, {\mathcal V})$\textit{ of all representable
functors} $F : \mathcal{C} \to \mathcal{V}$.

\textit{Problem B: Give a necessary and sufficient condition for a
given functor }$F : \mathcal{C} \to \mathcal{V}$ \textit{to be}
\textit{representable (possibly predefining the object of
representability)}.

\textit{Problem C: When is a composition of two representable
functors a representable functor?}

\textit{Problem D: Give a necessary and sufficient condition for a
representable functor $F : \Cc \to {\mathcal V}$ and for its left
adjoint to be separable or Frobenius.}

The pioneer of studying problem A was Kan \cite{kan} who described
all representable functors from semigroups to semigroups. A
crucial step related to problem A was made by Freyd in
\cite{freyd2}: if $\Cc$ is a cocomplete category and ${\mathcal
V}$ a variety of algebras then a functor $F: \Cc \to {\mathcal V}$
is representable if and only if $F$ is a right adjoint
(\cite[Theorem 8.14]{bergman}). A book dedicated exclusively to
problem A is \cite{bergman} where the category ${\bf Rep}\, (\Cc,
{\mathcal V})$ is described for different categories of varieties
of algebras $\Cc$ and ${\mathcal V}$. The fundamental example is
the following (\cite[Theorem 13.15]{bergman}): let $R$ be a ring
and $R-{\mathcal Rings}$ the category of $R$-rings. Then the
functor
$$
{\mathcal Y} : ({}_R\Mm _R)^{\rm op} \to {\bf Rep} \, (R-{\mathcal
Rings}, \, {\mathcal Ab}), \quad {\mathcal Y}(M):= {}_RHom_R (M,
-)
$$
is an equivalence of categories, where ${\mathcal Ab}$ is the
category of abelian groups. G. Janelidze pointed out that problem
A can be rephrased in a more elegant manner as follows: Let $T$ be
the corresponding Lawvere theory associated to $\mathcal{V}$. Then
a representable functor $F : \Cc \to \mathcal{V}$ is just a
functor $F : \Cc \to {\mathcal Set}$ equipped with an isomorphism
$F \cong \Hom_{\Cc} (C, -)$ and a $T$-coalgebra structure on $C$
(that is, a structure making $C$ a model of $T$ in $\Cc^{\rm
op}$). Then the problem A is reduced to: \textit{Describe the
category of models of $T$ in $\Cc^{\rm op}$.}

Concerning the problem B, several universal constructions in
mathematics like free groups, tensor products of modules, tensor
algebras, algebras of noncommutative differential forms give
answers to it in the trivial case $\mathcal{V} = {\mathcal Set}$.
We shall indicate two examples in the case of categories of
modules. For an $(S, R)$-bimodule $V$, the induction functor $
V\ot_R - : {}_R\Mm \to {}_S\Mm$ is representable if and only if
$V$ is finitely generated projective as a right $R$-module
\cite[Theorem 2.1]{morita}. On the other hand the property of a
functor to be Frobenius can be restated more elegantly as a
representability problem, predefining the object of
representability. For instance, \cite[Theorem 4.2]{CaenepeelMZ97b}
can be restated as follows: Let $H$ be a Hopf algebra over a field
and ${}_H ^H{\mathcal YD}$ be the category of Yetter-Drinfel'd
modules over $H$. Then the forgetful functor $F : {}_H ^H{\mathcal
YD} \to {}_H{\mathcal M}$ is representable having $H\otimes H$ as
a representing object if and only if $H$ is finite dimensional and
unimodular.

The problem C has a positive answer for categories of modules: the
tensor product of bimodules is responsible for this as
$$
{}_R{\rm Hom} (V, - ) \circ {}_S{\rm Hom} (U, - ) \cong {}_S{\rm
Hom} (U\otimes_R V,  - )
$$
if $R$, $S$, $T$ are rings, $U$ an $(S, R)$-bimodule and $V$ an
$(R, T)$-bimodule.

The problem D essentially depends on the nature of categories
$\Cc$ and ${\mathcal V}$. For example we can easily show that any
representable functor ${\rm Hom}_{{\mathcal Set}} (A, -) :
{\mathcal Set} \to {\mathcal Set}$ is separable while, if $\Cc =
{\mathcal Gr}^f$ is the category of finite groups, then no
representable functor $ Hom_{{\mathcal Gr}^{f}} (G, -) : {\mathcal
Gr}^f \to {\mathcal Set}$ is separable. Let now $U$ be an $(R,
S)$-bimodule and ${}^*U := {}_R\Hom (U, R) \in {}_{S}\Mm _{R}$.
Then the representable functor ${}_R\Hom (U, -) : {}_R\Mm \to
{}_S\Mm$ is separable if and only if there exists $\sum_i u_i
\ot_S u_i^* \in (U\ot_S {}^*U)^R$ such that $\sum_i u_i^* (u_i) =
1_R$ \cite[Corollary 5.8]{CaenepeelKadison}. The separability of
its left adjoint $U \otimes_S - $ was solved in \cite[Corollary
5.11]{CaenepeelKadison} in case $U$ is a finitely generated and
projective right $S$-module (in general this is still an open
problem).

In this paper we shall give answers to all the above problems in
case $\Cc = {}_{R}^C{\mathcal M}$, the category of left
$C$-comodules over an $R$-coring $C$ and $\mathcal{V} =
{}_{S}{\mathcal M}$, the category of left $S$-modules over a ring
$S$. For more details about the importance of corings and
comodules we refer to \cite{BrzezinskiWisbauer}. The paper is
organized as follows. In \seref{1} we recall the basic concepts
that will be used throughout the paper. In \seref{2} we prove all
technical results that we shall use to prove the main theorems of
the paper. We are focusing on the categories $\ul {Functors}\,
\bigl( {}_{S}{\mathcal M}, \, {}_{R}^C{\mathcal M} \bigl)$ and
$\ul {Functors}\, \bigl( {}_{R}^C{\mathcal M}, \, {}_{S}{\mathcal
M} \bigl)$ of all covariant functors that connect the category of
comodules over an $R$-coring $C$ and the category of modules over
a ring $S$. Two Yoneda type embeddings are constructed and the
classes of all natural transformations between an induction
functor and the identity functor on the category
${}_{R}^C\mathcal{M}$ are explicitly computed. \seref{3} contains
the main results of the paper. \thref{3.2} gives an answer for
Problem A: the category ${\bf Rep}\, ( {}_{R}^C{\mathcal M},
{}_{S}{\mathcal M} )$ is equivalent to the opposite of the
category ${}_{R}^C{\mathcal M}_S$. \coref{proC} offers an answer
for Problem C. Let $U$ be an $(S,R)$-bimodule: \thref{3.4} gives
necessary and sufficient conditions for the induction functor
$U\otimes_R - : {}_{R}^C\mathcal{M} \to {}_{S}\mathcal{M}$ to be a
representable functor, i.e. an answer for Problem B. It
generalizes and unifies two theorems that at first glance have
nothing in common: \cite[Theorem 2.1]{morita} is recovered for the
trivial coring $C = R$, while \cite[Theorem 4.1]{Brz:cor} is
obtained as a particular case for $U = S = R$ if in addition to
that we impose and predefine $C$ to be the object of
representability of $R\otimes_R -$. \exref{exem} and \coref{proC2}
explain that various theorems (\cite[Theorem 2.4]{CaenepeelMZ97b},
\cite[Theorem 4.1]{Brz:cor} etc.) giving necessary and sufficient
conditions for a forgetful functor to be Frobenius are particular
cases of representability. As a bonus of our approach, \thref{3.3}
gives necessary and sufficient conditions for $ U\otimes_R - $ to
be an equivalence of categories. Finally, \coref{3.5} and
\coref{3.7} give necessary and sufficient conditions for two types
of induction functors to be separable functors in case there
exists what we have called a comodule dual basis of first (or
second) kind: both are answers for Problem D.

\section{Preliminaries}\selabel{1}
We denote by ${\mathcal Set}$ the category of sets. All functors
in this paper will be covariant functors. $\Cc^{\rm op}$ will be
the opposite of a category $\Cc$. We denote by $\Nat (F, G)$ the
class of all natural transformations between two functors $F$, $G
: \Cc \to \Dd$ and by $ \ul {Functors}\, (\Cc, \Dd) = \Dd ^{\Cc}$
the category of all functors $F: \Cc\to \Dd$. The morphisms
between two functors $F$, $G \in \Dd ^{\Cc}$ are all natural
transformations $\varphi : F \to G$.

Let $R$, $S$ be two rings. We denote by ${}_R\Mm $, $\Mm _S$,
${}_{R}\Mm _{S}$ the categories of left $R$-modules, right
$S$-modules, $(R,S)$-bimodules. ${}_R\Hom (M, N)$, ${}\Hom_S (M,
N)$, ${}_R\Hom_S (M, N)$ will be the morphisms in the respective
categories. For an $R$-bimodule $M$ we denote by $M^R = \{ m \in M
\, | \, rm = mr, \, \forall r \in R \}$ the set of $R$-centralized
elements.

A covariant functor $F :\Cc \to {\mathcal Set}$ is called
representable if there exists $C\in \Cc$, called the representing
object of $F$, such that $ F \cong \Hom_{\Cc} (C, - )$ in $
{\mathcal Set}\, ^{\Cc}$. ${\bf Rep}\, (\Cc, {\mathcal Set})$ will
be the full subcategory of $ \ul {Functors}\, (\Cc, {\mathcal
Set})$ of all representable functors. The Yoneda lemma states that
for any functor $F: \Cc \to {\mathcal Set}$ and $C\in \Cc$ the map
\begin{equation}\eqlabel{1.3}
\Gamma : \Nat \, (\Hom_{\Cc} (C, -), F) \to F(C), \qquad \Gamma
(\varphi) := \varphi_C ({\rm Id}_C)
\end{equation}
is a bijection between sets with the inverse given by
\begin{equation}\eqlabel{1.4}
\Gamma^{-1} (x)_D (f) := F(f)(x)
\end{equation}
for all $x \in F(C)$, $D\in \Cc$ and $f \in \Hom_{\Cc} (C, D)$. As
a consequence, the functor
\begin{equation}\eqlabel{1.5}
Y : \Cc^{{\rm op}} \to \ul {Functors}\, (\Cc, {\mathcal Set}),
\quad Y(C):= \Hom_{\Cc} (C, - ), \quad Y(f):= \Hom_{\Cc} (f, - )
\end{equation}
for all $C$, $D\in \Cc$ and $f\in  \Hom_{\Cc} (C, D)$ is faithful
and full. Thus, there exists an equivalence of categories
$$
\Cc^{{\rm op}} \cong  {\bf Rep}\, (\Cc, {\mathcal Set}), \qquad C
\mapsto \Hom_{\Cc} (C, - )
$$

Let $\mathcal{V}$ be a variety of algebras in the sense of
universal algebra (for example $\mathcal{V}$ can be the category
of semigroups, monoids, groups, abelian groups, rings, algebras
over commutative rings or modules over a rings, etc.). We recall
from \cite{bergman} the following:

\begin{definition} Let $\mathcal{V}$ be a variety of algebras
and $\gamma : \mathcal{V} \to {\mathcal Set}$ the forgetful
functor. A functor $F: \mathcal{C} \to  \mathcal{V} $ is called
representable if $ \gamma \circ F : \mathcal{C} \to {\mathcal
Set}$ is representable in the classical sense.
\end{definition}

Let $F: \Cc \to \Dd$, $G: \Dd \to \Cc$ be two functors. $F$ is a
left adjoint of $G$ and we denote this by $ F \dashv G$ if there
exist two natural transformations $\eta:\ 1_\Cc\to GF$ and
$\varepsilon: \ FG\to 1_\Dd$, called the unit and counit of the
adjunction, such that
\begin{equation}\eqlabel{1.6}
G(\varepsilon_D)\circ\eta_{G(D)}= I_{G(D)}~~~~{\rm and}~~~~
\varepsilon_{F(C)}\circ F(\eta_C)= I_{F(C)}
\end{equation}
for all $C\in \Cc$ and $D\in \Dd$.

A functor $F: \Cc \to \Dd$ is called a Frobenius functor if there
exists a functor $G$ that is a left and right adjoint of $F$. Let
$ F \dashv G$ be an adjoint pair. Then $F$ is a separable functor
if and only if $\eta:\ 1_\Cc\to GF$ splits: i.e. there exists a
natural transformation $\nu :\ GF \to 1_\Cc$ such that $\nu_C
\circ \eta_C = {\rm Id}_C$, for all $C \in \Cc$. Moreover, $G$ is
separable if and only if $\varepsilon:\ FG\to 1_\Dd$ cosplits,
i.e. there exists a natural transformation $\xi:\ 1_\Dd \to FG$
such that $\varepsilon_D \circ \xi_D = {\rm Id}_S$, for all $D \in
\Dd$. For details and more examples of Frobenius or separable
functors we refer to \cite{BrzezinskiWisbauer}, \cite{CMZ2002}.

Let $R$ be a ring and $C = (C, \Delta, \varepsilon)$ an
$R$-coring: i.e. $C$ is a comonoid in the monoidal category of
$R$-bimodules $ ({}_R{\mathcal M}_{R}, -\otimes_R - , R )$. We
denote by ${\mathcal M}_{R}^C$, ${}_{R}^C{\mathcal M}$ and
${}_{R}^C{\mathcal M}_{R}^C$ the categories of right, left,
respectively $C$-bicomodules. ${\rm Hom}_R^C (M, N)$, ${}_R^C {\rm
Hom} (M, N)$ and ${}_R^C {\rm Hom}_R^C (M, N)$ will be the set of
all morphisms in the categories of right, left and respectively
$C$-bicomodules, for two $C$-comodules $M$ and $N$. A right
$C$-coaction will be denoted by
$$
\rho : M \to M \otimes_R C, \quad \rho (m) = m_{<0>} \otimes_R
m_{<1>}
$$
for all $M \in {\mathcal M}_{R}^C$ and $m\in M$ and a left
$C$-coaction will be denoted by
$$
\rho : M \to C \otimes_R M, \quad \rho (m) = m_{<-1>} \otimes_R
m_{<0>}
$$
for all $M \in {}_{R}^C{\mathcal M}$ and $m\in M$ (summation
understood). The categories ${\mathcal M}_{R}^C$,
${}_{R}^C{\mathcal M}$ and ${}_{R}^C{\mathcal M}_{R}^C$ are
additive and cocomplete (they have all coproducts and coequalizers
\cite[Proposition 18.13]{BrzezinskiWisbauer}).

Let $R$, $S$ be two rings, $C$ an $R$-coring and
${}_{R}^C{\mathcal M}_S$ be the category of all pairs $(V,
\rho_V)$, where $V$ is an $(R,S)$-bimodule, $\rho_V : V \to C\ot_R
V$ is a morphism of $(R,S)$-bimodules and a left $C$-coaction on
$V$. For two objects $V$, $W \in {}_{R}^C{\mathcal M}_S$ we denote
by ${}_R^C {\rm Hom}_S (V, W)$ the set of morphisms in the
category ${}_{R}^C{\mathcal M}_S$, i.e. the set of all
$(R,S)$-bimodule maps $f : V \to W$ that are also left
$C$-comodule maps. The category ${}_{S}\mathcal{M}^C_R$ is defined
similarly.

Let $V \in {}_{R}^C{\mathcal M}_S$. Then we have two functors
$$
V\otimes _S - : {}_{S}{\mathcal M} \to {}_{R}^C{\mathcal M}, \quad
{}_R^C {\rm Hom} (V, -): {}_{R}^C{\mathcal M} \to {}_{S}{\mathcal
M}
$$
where $V \ot_S N \in {}_{R}^C{\mathcal M}$ via $r \cdot (v \ot_S
n) := rv \ot_S n$, $\rho (v \ot_S n) := v_{<-1>} \ot_R v_{<0>}
\ot_S n$, for all $N \in {}_{S}{\mathcal M}$, $r\in R$, $v\in V$
and $n \in N$ and ${}_R^C {\rm Hom} (V, M) \in {}_{S}{\mathcal M}$
via $(s \cdot f) (v) := f (vs)$, for all $M \in {}_{R}^C{\mathcal
M}$, $s\in S$, $f\in {}_R^C {\rm Hom} (V, M)$ and $v\in V$.

The following is the left version of \cite[Theorem 3.2]{Joost} as
a generalization of the Eilenberg–-Watts theorem for categories of
modules.

\begin{theorem}\thlabel{3.1}
Let $R$, $S$ be two rings, $C$ an $R$-coring, $F: {}_{S}{\mathcal
M} \to {}_{R}^C{\mathcal M}$ and $G : {}_{R}^C{\mathcal M} \to
{}_{S}{\mathcal M}$ two functors. Then $F$ is a left adjoint of
$G$ if and only if there exists $V \in {}_{R}^C{\mathcal M}_S$,
unique up to an isomorphism in ${}_{R}^C{\mathcal M}_S$, such that
$$
F \cong V\otimes _S -  \qquad G\cong {}_R^C {\rm Hom} (V, -)
$$
\end{theorem}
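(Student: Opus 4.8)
The plan is to prove the two implications separately, the substantive one being that any left adjoint $F\dashv G$ arises from an object $V\in{}_{R}^C{\mathcal M}_S$ via $F\cong V\otimes_S-$ and $G\cong{}_R^C{\rm Hom}(V,-)$. For the easy direction, suppose $V\in{}_{R}^C{\mathcal M}_S$ is given. Then $V\otimes_S-$ and ${}_R^C{\rm Hom}(V,-)$ are the functors described just before the statement, and one checks the adjunction $(V\otimes_S-)\dashv{}_R^C{\rm Hom}(V,-)$ directly: the unit $\eta_N\colon N\to{}_R^C{\rm Hom}(V,V\otimes_S N)$ sends $n$ to the map $v\mapsto v\otimes_S n$, and the counit $\varepsilon_M\colon V\otimes_S{}_R^C{\rm Hom}(V,M)\to M$ sends $v\otimes_S f$ to $f(v)$; verifying that these are well-defined morphisms in the respective categories (in particular that $\varepsilon_M$ is a $C$-comodule map, using that each $f$ is) and that they satisfy the triangle identities \equref{1.6} is a routine computation, essentially the hom-tensor adjunction with the coaction constraints carried along.

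The heart of the matter is the forward direction. Assume $F\dashv G$ with unit $\eta$ and counit $\varepsilon$. First I would identify the candidate object. Applying $F$ to the regular module ${}_SS$ produces $F(S)\in{}_{R}^C{\mathcal M}$; the right $S$-action on $S$ by multiplication gives, via functoriality of $F$, a right $S$-action on $F(S)$ commuting with the left $R$-action and with the $C$-coaction, so $V:=F(S)$ is naturally an object of ${}_{R}^C{\mathcal M}_S$. Next I would produce a natural isomorphism $F\cong V\otimes_S-$. For a free module $S^{(I)}$ one has $F(S^{(I)})\cong F(S)^{(I)}=V^{(I)}\cong V\otimes_S S^{(I)}$ because $F$, being a left adjoint, preserves coproducts; then for an arbitrary left $S$-module $N$, pick a presentation $S^{(J)}\to S^{(I)}\to N\to 0$, apply $F$ and $V\otimes_S-$ (both right exact — $F$ as a left adjoint, $V\otimes_S-$ by construction), and use the five lemma / cokernel comparison together with naturality of the iso on frees to get $F(N)\cong V\otimes_S N$ naturally. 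This is precisely the Eilenberg–Watts argument, now internal to the additive cocomplete category ${}_{R}^C{\mathcal M}$ (cocompleteness being \cite[Proposition 18.13]{BrzezinskiWisbauer}). Once $F\cong V\otimes_S-$ is established, $G\cong{}_R^C{\rm Hom}(V,-)$ follows from uniqueness of adjoints: $V\otimes_S-$ has ${}_R^C{\rm Hom}(V,-)$ as a right adjoint by the easy direction, and right adjoints are unique up to natural isomorphism.

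For uniqueness of $V$: if $V\otimes_S-\cong V'\otimes_S-$ as functors ${}_{S}{\mathcal M}\to{}_{R}^C{\mathcal M}$, evaluate at ${}_SS$ to get an isomorphism $V\cong V'$ in ${}_{R}^C{\mathcal M}$, and check that the naturality with respect to the right multiplications on $S$ forces this isomorphism to be right $S$-linear, hence an isomorphism in ${}_{R}^C{\mathcal M}_S$. The main obstacle I anticipate is not conceptual but bookkeeping: one must be careful that every comparison map produced in the Eilenberg–Watts step genuinely lives in ${}_{R}^C{\mathcal M}$ — i.e. respects the $C$-coaction — and that right exactness of $V\otimes_S-$ into ${}_{R}^C{\mathcal M}$ is legitimate, which uses that the forgetful functor ${}_{R}^C{\mathcal M}\to{}_R{\mathcal M}$ creates cokernels. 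Since the statement is quoted as the left-handed version of \cite[Theorem 3.2]{Joost}, the cleanest route may simply be to invoke that reference after setting up the mirror-image conventions, rather than reproducing the full argument.
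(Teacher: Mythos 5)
Your argument is correct, and it is worth pointing out that the paper itself offers no proof of \thref{3.1}: the statement is simply quoted as the left-handed version of \cite[Theorem 3.2]{Joost}, which is precisely the fallback you mention in your final sentence. What you have written is the standard Eilenberg--Watts argument carried out internally to ${}_{R}^C{\mathcal M}$, and it is essentially the proof underlying the cited result, so the two ``approaches'' differ only in that yours is self-contained. All the ingredients you flag are the right ones: $V:=F(S)$ acquires its right $S$-action by applying $F$ to the right multiplications $r_s:S\to S$; the category ${}_{R}^C{\mathcal M}$ is additive and cocomplete with colimits created by the forgetful functor to ${}_{R}{\mathcal M}$; and a left adjoint preserves coproducts and cokernels (and is automatically additive, since it preserves biproducts). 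One presentational improvement: rather than constructing the isomorphism on free modules and then extending along presentations (which raises a spurious well-definedness question about the choice of presentation), define the comparison map $\alpha_N: V\otimes_S N\to F(N)$, $v\otimes_S n\mapsto F(u_n)(v)$ with $u_n(s)=sn$, directly for every $N$; it is balanced over $S$ by the definition of the right $S$-action on $V$, it is left $R$-linear and a $C$-comodule map because each $F(u_n)$ is, it is natural in $N$, it is an isomorphism at ${}_SS$ and hence at all free modules, and right exactness of both functors then makes it an isomorphism everywhere. Finally, the uniqueness of $V$ via evaluation at ${}_SS$ and naturality with respect to right multiplications is exactly the content of the proof of part (1) of \prref{2.1}, so you could simply cite that instead of redoing it.
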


Let $U\in {}_{S}\mathcal{M}_R$ and $V \in {}_{R}^C\mathcal{M}_S$.
Then $U\ot_R C \in {}_{S}\mathcal{M}^C_R$ via the right
$C$-coaction
$$
u\ot_R c \mapsto u\ot_R c_{(1)} \ot_R c_{(2)}
$$
for all $u\in U$, $c\in C$ and $V\ot_S U \ot_R C \in
{}_{R}^C\mathcal{M}_R^C$, where the left and the right
$C$-coactions are defined by
$$
v\ot_S u \ot_R c \mapsto v_{<-1>} \otimes_R v_{<0>} \ot_S u \ot_R
c, \qquad v\ot_S u \ot_R c \mapsto v\ot_S u \ot_R c_{(1)} \ot_R
c_{(2)}
$$
for all $v\in V$, $u\in U$ and $c\in C$. Moreover, ${}_R^C {\rm
Hom} (V, C) \in {}_{S}\mathcal{M}_R$, where the right $R$-action
is given by
$$
(f\cdot r) (v) := f(v) r
$$
for all $f \in {}_R^C {\rm Hom} (V, C)$, $r\in R$, $v\in V$.

\section{Computing natural transformations and Yoneda type
embeddings}\selabel{2}

In this section we shall prove all technical results that we shall
use later on. Let $R$, $S$ be two rings, $C$ an $R$-coring, $V$,
$W \in {}_{R}^C{\mathcal M}_S$ and $f : V \to W$ a morphism in
${}_{R}^C{\mathcal M}_S$. We associate to $f$ two natural
transformations:
$$
f\otimes_S -  : V\ot_S - \to W\ot_S -, \qquad v\ot_S n \mapsto f
(v) \ot_S n
$$
for all $N \in {}_{S}{\mathcal M}$, $n\in N$, $v \in V$ and
$$
{}_R^C {\rm Hom} (f, -) : {}_R^C {\rm Hom} (W, -) \to {}_R^C {\rm
Hom} (V, -), \quad \alpha \mapsto \alpha \circ f
$$
for all $M \in {}_{R}^C{\mathcal M}$, $\alpha \in {}_R^C {\rm Hom}
(W, M)$.

\begin{proposition}\prlabel{2.1}
Let $R$, $S$ be two rings, $C$ an $R$-coring. Then:
\begin{enumerate}
\item The functor
$$
Y_1 : {}_{R}^C{\mathcal M}_S \to \ul {Functors}\, \bigl(
{}_{S}{\mathcal M}, \, {}_{R}^C{\mathcal M} \bigl), \qquad Y_1
(V): = V\ot_S -
$$
for all $V \in {}_{R}^C\mathcal{M}_S$ is faithful and
full.\vspace{2mm}

\item The functor
$$
Y_2 :  ({}_{R}^C\mathcal{M}_S )^{{\rm op}} \to \ul {Functors}\,
\bigl( {}_{R}^C{\mathcal M}, \, {}_{S}{\mathcal M} \bigl), \qquad
Y_2 (V): = {}_R^C {\rm Hom} (V, -)
$$
for all $V \in {}_{R}^C\mathcal{M}_S$ is faithful and full.
\end{enumerate}
\end{proposition}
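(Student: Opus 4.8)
The plan is to treat parts (1) and (2) separately but by the same strategy: show that on morphisms the assignments $f \mapsto f\ot_S -$ and $f \mapsto {}_R^C{\rm Hom}(f,-)$ are injective (faithfulness) and surjective (fullness) onto the relevant classes of natural transformations. Functoriality of $Y_1$ and $Y_2$ is routine and I would only mention it. The heart of the matter is the Yoneda-type computation of $\Nat(V\ot_S -, W\ot_S -)$ and $\Nat\bigl({}_R^C{\rm Hom}(V,-), {}_R^C{\rm Hom}(W,-)\bigr)$.

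For part (2), I would invoke the classical Yoneda lemma directly. Since $V \in {}_{R}^C\mathcal{M}_S$, the functor ${}_R^C{\rm Hom}(V,-) : {}_{R}^C{\mathcal M} \to {}_{S}{\mathcal M}$ composed with the forgetful functor ${}_{S}{\mathcal M} \to {\mathcal Set}$ is the representable functor $\Hom_{{}_{R}^C{\mathcal M}}(V,-)$. By \equref{1.3}, a natural transformation $\psi : {}_R^C{\rm Hom}(V,-) \to {}_R^C{\rm Hom}(W,-)$ of $\Set$-valued functors is determined by $\psi_V({\rm Id}_V) \in {}_R^C{\rm Hom}(W,V)$; call this element $f$. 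Then $\psi_M(\alpha) = {}_R^C{\rm Hom}(\alpha,-)$ applied appropriately, i.e. $\psi_M(\alpha) = \alpha \circ f$ for all $M$ and all $\alpha \in {}_R^C{\rm Hom}(V,M)$; this is exactly the formula \equref{1.4} unwound. Conversely any such $f$ gives a natural transformation. This already shows the $\Set$-level assignment is a bijection onto all natural transformations; it remains to check that $\psi$ is automatically $S$-linear in each component (so the extra structure imposes no constraint on the $\Set$-level natural transformations) — this is a short check using the definition $(s\cdot\alpha)(v) = \alpha(vs)$ and the fact that $f$ is a left $C$-comodule map — and that $\psi = {}_R^C{\rm Hom}(f,-)$ under our sign/order conventions. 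Faithfulness is then immediate: if ${}_R^C{\rm Hom}(f,-) = {}_R^C{\rm Hom}(g,-)$, evaluate at $M = V$ on ${\rm Id}_V$ to get $f = g$.

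For part (1), the functor $V\ot_S -$ is not $\Set$-valued-representable in the obvious way, so I would argue more directly (an Eilenberg--Watts-style argument). Given a natural transformation $\theta : V\ot_S - \to W\ot_S -$, the component at the free module $S$, $\theta_S : V\ot_S S \to W\ot_S S$, is a morphism in ${}_{R}^C{\mathcal M}$; identifying $V\ot_S S \cong V$ and $W\ot_S S \cong W$ as objects of ${}_{R}^C{\mathcal M}$ yields a morphism $f := \theta_S : V \to W$ in ${}_{R}^C{\mathcal M}$. One checks $f$ is also right $S$-linear using naturality with respect to the right-multiplication maps $S \to S$, $x \mapsto xs$ — these are morphisms in ${}_{S}{\mathcal M}$, so $\theta$ commutes with $V\ot_S(-)$ applied to them — hence $f \in {}_R^C{\rm Hom}_S(V,W)$. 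Then for an arbitrary left $S$-module $N$, pick a free presentation $S^{(J)} \to S^{(I)} \to N \to 0$ (or just use that every element of $V\ot_S N$ is in the image of $V\ot_S S \to V\ot_S N$ induced by some $S$-linear map $S \to N$); naturality forces $\theta_N = f\ot_S {\rm id}_N$. Thus every natural transformation is of the form $f\ot_S -$, giving fullness, and faithfulness follows by evaluating at $N = S$.

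The main obstacle is the bookkeeping in part (1): one must verify carefully that the element $f$ extracted from $\theta_S$ is simultaneously a left $C$-comodule map and a right $S$-module map, and that naturality at all free modules $S^{(I)}$ (which requires commuting $\theta$ with arbitrary, possibly infinite, coproduct inclusions and with $S$-linear maps between free modules) rigidly pins down $\theta_N$ for every $N$ via right-exactness of $V\ot_S -$ and $W\ot_S -$. The comodule-compatibility of $f$ is the one genuinely coring-specific point and uses that the identifications $V\ot_S S \cong V$, $W\ot_S S \cong W$ are isomorphisms in ${}_{R}^C{\mathcal M}$, i.e. respect the left $C$-coactions, which is built into the definition of the functor $V\ot_S -$ recalled in \seref{1}. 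Everything else is a routine diagram chase.
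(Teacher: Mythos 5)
Your overall strategy coincides with the paper's: part (2) via the Yoneda lemma transported to ${}_{S}{\mathcal M}$-valued functors, part (1) by evaluating at the regular module $S$ and propagating by naturality. Part (1) is correct and is essentially the paper's argument. Note that your second, lighter option is all that is needed: naturality with respect to $u_n : S \to N$, $s \mapsto sn$, already pins down $\theta_N$ on every elementary tensor $v\ot_S n$, and right $S$-linearity of $f$ then drops out by specializing to $N=S$; the free-presentation / right-exactness / infinite-coproduct machinery you flag as the ``main obstacle'' is not needed at all.

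There is, however, a genuine error in part (2). You claim that a $\Set$-level natural transformation $\psi : {}_R^C\Hom(V,-) \to {}_R^C\Hom(W,-)$ is ``automatically $S$-linear in each component (so the extra structure imposes no constraint)''. This is false. By the ordinary Yoneda lemma such a $\psi$ corresponds to an arbitrary $f \in {}_R^C\Hom(W,V)$ --- left $R$-linear and $C$-colinear, but not necessarily right $S$-linear --- and the component $\alpha \mapsto \alpha\circ f$ is left $S$-linear precisely when $f$ is right $S$-linear: one has $\bigl((s\cdot\alpha)\circ f\bigr)(w) = \alpha\bigl(f(w)s\bigr)$ while $\bigl(s\cdot(\alpha\circ f)\bigr)(w) = \alpha\bigl(f(ws)\bigr)$, and taking $\alpha = \Id_V$ forces $f(ws)=f(w)s$. (Already for the trivial coring $C=R$ with $V=W=S=R$ noncommutative, the $\Set$-natural endotransformations of ${}_R\Hom(R,-)$ correspond to all of $R$, but only the central elements yield $S$-linear components.) Indeed, if your claim were true, fullness of $Y_2$ would fail whenever ${}_R^C\Hom_S(W,V) \subsetneq {}_R^C\Hom(W,V)$. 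The correct logic runs the other way: the $S$-linearity of the components is part of the hypothesis (morphisms in $\ul{Functors}\,({}_{R}^C{\mathcal M}, {}_{S}{\mathcal M})$ have components in ${}_{S}{\mathcal M}$ by definition), and it must be \emph{used} to prove that the Yoneda element $f$ is right $S$-linear. The paper does this by combining naturality with respect to $\gamma_s : w \mapsto ws$ (a morphism in ${}_{R}^C{\mathcal M}$) with the $S$-linearity of the component at the representing object; your appeal to ``$f$ is a left $C$-comodule map'' has no bearing on $S$-linearity. With this one step repaired, your part (2) becomes the paper's proof.
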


\begin{proof}
1. Let $V$, $W \in {}_{R}^C\mathcal{M}_S$. We have to prove that
$$
(Y_1)_{V, W} : {}_R^C {\rm Hom}_S (V, W) \to \Nat \bigl( V\ot_S -
, \, W\ot_S - \bigl), \quad (Y_1)_{V, W} (f) = f\ot_S -
$$
for all $f\in {}_R^C {\rm Hom}_S (V, W)$ is a bijection between
sets.

Let $\phi : V \ot_S -  \to W \ot_S - $ be a natural
transformation. In particular, $\phi_S : V \ot_S S  \to W \ot_S S$
is a morphism in ${}_{R}^C\mathcal{M}$. We define $f: V \to W$ by
the formula $f := can' \circ \phi_S \circ can $, where $can : V
\to V\ot_S S$ and $can' : W\ot_S S \to W$ are canonical
isomorphisms. Of course $f$ is a morphism in
${}_{R}^C\mathcal{M}$. Using the fact that $\phi$ is a natural
transformation we shall prove that $f$ is also a right $S$-module
map, hence a morphism in ${}_{R}^C\mathcal{M}_S$ and $\phi$ is
uniquely determined by $f$ with the formula $\phi = (Y_1)_{V, W}
(f)$.

Let $N \in {}_{S}\mathcal{M}$ and $n\in N$. Then $u_n : S \to N$,
$u_n (s) := sn$ is a morphism in ${}_{S}\mathcal{M}$. Thus the
diagram
$$
\begin {CD}
V\otimes_S S @> \phi_S >> W\otimes_S S\\
@V Id \otimes_S u_n VV @VV Id\otimes_S u_n V\\
V\otimes_S N @> \phi_N >> W\otimes_S N
\end{CD}
$$
is commutative. We evaluate at $v\ot_S 1_S$ and we obtain that
$$
\phi_N (v \ot_S n) = f(v) \ot_S n
$$
for all $N \in {}_{S}\mathcal{M}$, $v\in V$ and $n\in N$. In
particular, for $N := S$ we obtain that $f$ is also a right
$S$-module map and the above formula tells us that $ \phi =
(Y_1)_{V, W} (f)$.

2. Let $V$, $W \in {}_{R}^C\mathcal{M}_S$. We have to prove that
$$
(Y_2)_{V, W} : {}_R^C {\rm Hom}_S (V, W) \to \Nat \bigl( {}_R^C
{\rm Hom} (W, -)  , \,
 {}_R^C {\rm Hom} (V, -)  \bigl), \, (Y_2)_{V, W} (f) =  {}_R^C {\rm Hom} (f, -)
$$
for all $f\in {}_R^C {\rm Hom}_S (V, W)$ is a bijection between
sets with the inverse given by
$$
(Y_{2}^{-1} )_{V, W} (\theta) = \theta_W ( Id_W ) : V \to W
$$
for any natural transformation $\theta : {}_R^C {\rm Hom} (W, -)
\to {}_R^C {\rm Hom} (V, -)$. This follows straightforward from
the Yoneda lemma if we replace the category ${\mathcal Set}$ with
the category of left $S$-modules. The only two things we have to
prove are that the maps \equref{1.3}, \equref{1.4} from the Yoneda
lemma work properly. More precisely, we note that if $M \in
{}_{R}^C\mathcal{M}$ and $f\in {}_R^C {\rm Hom}_S (V, W)$, then we
can easily show that $Y_2 (f)_M : {}_R^C {\rm Hom} (W, M) \to
{}_R^C {\rm Hom} (V, M)$, $Y_2 (f)_M (\alpha) = \alpha \circ f$,
for all $\alpha \in {}_R^C {\rm Hom} (W, M)$ is a morphism of left
$S$-modules. Finally, if $\theta : {}_R^C {\rm Hom} (W, -) \to
{}_R^C {\rm Hom} (V, -)$ is a natural transformation we have to
prove that $\theta_W (Id_W) : V \to W$ is also a right $S$-module
map, hence a morphism in ${}_{R}^C\mathcal{M}_S$. We shall use
that $\theta$ is a natural transformation. Let $s \in S$ and
$\gamma_s : W \to W$, $\gamma_s (w) := ws$, for all $w \in W$.
Then $\gamma_s$ is a morphism in ${}_{R}^C\mathcal{M}$, thus we
have a commutative diagram
$$
\begin {CD}
{}_R^C {\rm Hom} (W, W)  @> \theta_W >> {}_R^C {\rm Hom} (V, W)\\
@V {}_R^C {\rm Hom} (W, \gamma_s) VV @ VV {}_R^C {\rm Hom}(v ,\gamma_s) V\\
{}_R^C {\rm Hom} (W, W) @> \theta_W >> {}_R^C {\rm Hom} (V, W)
\end{CD}
$$
Now, if we evaluate the diagram at ${\rm Id}_W$ we obtain that
$\theta_W (Id_W)$ is also a right $S$-module map and the proof is
finished.
\end{proof}

In the next two Lemmas we shall compute all natural
transformations between an induction functor and the identity
functor on the category ${}_{R}^C\mathcal{M}$ of left
$C$-comodules. For any object $Z\in {}_{R}^C\mathcal{M}_R$ we
denote by
$$
{}_R^{\bullet}{\rm Hom}_R (Z\ot_R C, \, R)
$$
the set of all $R$-bimodule maps $h: Z\ot_R C \to R$ satisfying the
compatibility condition
\begin{equation}\eqlabel{100}
z_{<-1>} h(z_{<0>} \ot_R c) = h(z \ot_R c_{(1)} ) c_{(2)}
\end{equation}
for all $z\in Z$ and $c\in C$.

\begin{lemma}\lelabel{2.5}
Let $R$ be a ring, $C$ an $R$-coring, $Z\in {}_{R}^C\mathcal{M}_R$
and the induction functor $Z\ot_R - : \, {}_{R}^C\mathcal{M} \to
{}_{R}^C\mathcal{M}$. Then there exists a bijection between sets
$$
\Nat \bigl( Z\ot_R - , \, 1_{{}_{R}^C\mathcal{M}} \bigl) \, \cong
\,  {}_R^C{\rm Hom}_R^C (Z\ot_R C, \, C) \, \cong \,
{}_R^{\bullet}{\rm Hom}_R (Z\ot_R C, \, R)
$$

Explicitly, for any natural transformation $\psi : Z\ot_R - \to
1_{{}_{R}^C\mathcal{M}}$ there exists a unique map $h \in
{}_R^{\bullet}{\rm Hom}_R (Z\ot_R C, \, R)$ such that
\begin{equation}\eqlabel{241}
\psi_M : Z\ot_R M \to M, \quad \psi_M (z \ot_R m) = h ( z \ot_R
m_{<-1>})m_{<0>}
\end{equation}
for all $M \in {}_{R}^C\mathcal{M}$, $m\in M$ and $z\in Z$.
\end{lemma}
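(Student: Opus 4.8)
The plan is to establish the two bijections in the chain $\Nat\bigl(Z\ot_R -,\,1_{{}_{R}^C\mathcal{M}}\bigr)\cong{}_R^C{\rm Hom}_R^C(Z\ot_R C,\,C)\cong{}_R^{\bullet}{\rm Hom}_R(Z\ot_R C,\,R)$ separately, and then to trace through the identifications to extract the explicit formula \equref{241}. For the first bijection, I would apply the Yoneda-type machinery already available: by \prref{2.1}(1) the functor $Y_1$ sending $Z\ot_R C\in{}_{R}^C\mathcal{M}_R$ to the induction functor $(Z\ot_R C)\ot_R -\cong Z\ot_R(C\ot_R -)$ is full and faithful, and $C\ot_R-$ is the cofree functor left adjoint to the forgetful functor ${}_{R}^C\mathcal{M}\to{}_R\mathcal{M}$ (equivalently, apply \thref{3.1} with $S=R$). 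Concretely, a natural transformation $\psi:Z\ot_R-\to1_{{}_{R}^C\mathcal{M}}$ is determined by its component $\psi_C:Z\ot_R C\to C$, which must be a morphism in ${}_{R}^C\mathcal{M}_R^C$; conversely, given $g\in{}_R^C{\rm Hom}_R^C(Z\ot_R C,C)$ one reconstructs $\psi$ by $\psi_M(z\ot_R m):=(g\ot_R M)\circ(Z\ot_R\rho_M)(z\ot_R m)=g(z\ot_R m_{<-1>})\ot_R\cdot\;$ followed by the counit action $m_{<0>}$. I would verify naturality and that these two assignments are mutually inverse, the key point being that $\psi_M$ factors through $\rho_M:M\to C\ot_R M$ because $\rho_M$ is a comodule map natural in $M$.

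For the second bijection ${}_R^C{\rm Hom}_R^C(Z\ot_R C,C)\cong{}_R^{\bullet}{\rm Hom}_R(Z\ot_R C,R)$, the idea is the standard ``cotensor/counit'' adjunction for corings: to a right comodule map $g:Z\ot_R C\to C$ associate $h:=\varepsilon\circ g:Z\ot_R C\to R$, which is automatically an $R$-bimodule map. The right $C$-colinearity of $g$, namely $\Delta\circ g=(g\ot_R C)\circ(Z\ot_R\Delta)$, combined with counitality, shows $g(z\ot_R c)=h(z\ot_R c_{(1)})c_{(2)}$, so $g$ is recovered from $h$; and the left $C$-colinearity of $g$, namely $\rho\circ g=(Z\text{-coaction}\otimes C)$-compatibility, translates precisely into the identity $z_{<-1>}h(z_{<0>}\ot_R c)=h(z\ot_R c_{(1)})c_{(2)}$ defining ${}_R^{\bullet}{\rm Hom}_R(Z\ot_R C,R)$ once one applies $\varepsilon$. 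Conversely, given such an $h$, one defines $g(z\ot_R c):=h(z\ot_R c_{(1)})c_{(2)}$ and checks it lands in ${}_R^C{\rm Hom}_R^C(Z\ot_R C,C)$; right colinearity is coassociativity of $\Delta$, and left colinearity is exactly condition \equref{100}.

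Composing the two bijections gives, for $\psi$ corresponding to $h$, the formula $\psi_M(z\ot_R m)=g(z\ot_R m_{<-1>})\leftact m_{<0>}$ where $g$ is built from $h$ as above and the action of $C$ on $M$ via its coaction and $\varepsilon$ collapses to $\psi_M(z\ot_R m)=\varepsilon\bigl(g(z\ot_R m_{<-1>})\bigr)m_{<0>}=h(z\ot_R m_{<-1>})m_{<0>}$ after using the counit axiom $\varepsilon(m_{<-1>})m_{<0>}=m$ on $M$; this is precisely \equref{241}. The main obstacle I anticipate is bookkeeping: correctly matching the left-colinearity condition on $g\in{}_R^C{\rm Hom}_R^C(Z\ot_R C,C)$ with condition \equref{100}, since one must apply $\varepsilon$ to the second $C$-leg at the right moment and use that $Z$ carries a left $C$-coaction $\rho_Z(z)=z_{<-1>}\ot_R z_{<0>}$, while simultaneously keeping the right $C$-colinearity (which is just coassociativity) from interfering. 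The rest — naturality, $R$-bilinearity, and the two ``mutually inverse'' verifications — is routine diagram-chasing of the kind already carried out in the proof of \prref{2.1}.
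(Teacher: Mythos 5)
Your proposal is correct and follows essentially the same route as the paper: the second bijection is realized by $h\mapsto h(\,\cdot\,\ot_R c_{(1)})c_{(2)}$ with inverse $\varepsilon_C\circ(-)$, and the first by using naturality against the coaction $\rho_M$ (a morphism in ${}_{R}^C\mathcal{M}$) to show that $\psi_C$ is a bicomodule map and that $\psi_M=(\varepsilon_C\circ\psi_C\ot_R{\rm Id}_M)\circ({\rm Id}_Z\ot_R\rho_M)$, which composes to \equref{241}. The only caveat is that your opening appeal to $Y_1$ from \prref{2.1}(1) does not literally apply here (that proposition concerns functors out of ${}_{S}\mathcal{M}$, not out of ${}_{R}^C\mathcal{M}$, and restricting along the cofree functor would only yield left-colinear right $R$-linear maps rather than bicomodule maps), but your subsequent direct argument via $\psi_C$ is exactly the paper's and carries the proof.
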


\begin{proof} The last bijection follows from Hom-tensor type relations
(\cite{BrzezinskiWisbauer}). More precisely, the map
$$
\alpha : {}_R^{\bullet}{\rm Hom}_R (Z\ot_R C , \, R ) \to {}_R^C
{\rm Hom}_R^C (Z\ot_R C , \, C ), \quad \alpha (h) (z\ot_R c) :=
h(z\ot_R c_{(1)} ) c_{(2)}
$$
for all $h\in {}_R^{\bullet}{\rm Hom}_R (Z\ot_R C , \, R )$, $z\in
Z$ and $c\in C$ is bijective with the inverse given by
$$
\alpha^{-1} (f) (z\ot_R c) := \varepsilon_C \bigl( f (z\ot_R c)
\bigl)
$$
for all $f \in {}_R^C {\rm Hom}_R^C (Z\ot_R C , \, C )$, $z\in Z$
and $c\in C$.

Let now $\psi : Z\ot_R - \to 1_{{}_{R}^C\mathcal{M}}$ be a natural
transformation. In particular, $\psi_C : Z\ot_R C \to C$ is a
morphism in ${}_{R}^C\mathcal{M}$. Using that $\psi$ is a natural
transformation we shall prove that $\psi_C$ is a morphism in
${}_{R}^C\mathcal{M}_R^C$ and $\psi$ is uniquely determined by
$\psi_C$. Let $r\in R$ and $f_r : C \to C$, $f_r (c) := cr$, for
all $c\in C$. Then $f_r$ is a morphism in ${}_{R}^C{\mathcal M}$
and hence the diagram
$$
\begin {CD}
Z\ot_R C @> \psi_C >> C \\
@V Id_Z\ot_R f_r VV @VV f_r V\\
Z\ot_R C @> \psi_C >> C
\end{CD}
$$
is commutative, i.e. $\psi_C$ is also a right $R$-module map. Let
$N\in {}_{R}{\mathcal M}$ and $n\in N$. Then $f_n : C \to C\ot_R
N$, $f_n (c) := c\ot_R n$, for all $c\in C$ is a map in
${}_{R}^C{\mathcal M}$. Thus the diagram
$$
\begin {CD}
Z\ot_R C @> \psi_C >> C \\
@V Id_Z \ot_R f_n VV @VV f_n V\\
Z\ot_R C\ot_R N @> \psi_{C\ot_R N} >> C\ot_R N
\end{CD}
$$
is commutative, which means that
$$
\psi_{C\ot_R N} = \psi_C \ot_R {\rm Id}_N
$$
for all $N\in {}_{R}{\mathcal M}$.

Let $(M, \rho) \in {}_{R}^C{\mathcal M}$; then $\rho : M \to
C\ot_R M$ is a morphism in ${}_{R}^C{\mathcal M}$ so the diagram
$$
\begin {CD}
Z\ot_R M @> \psi_M >> M \\
@V Id_Z \ot_R \rho VV @VV \rho V\\
Z\ot_R C\ot_R M @> \psi_{C\ot_R M} >> C\ot_R M
\end{CD}
$$
is commutative. Using that $\psi_{C\ot_R M} = \psi_C \ot_R Id_M$
we obtain if we evaluate the last diagram at $z\ot_R m$:
\begin{equation}\eqlabel{160}
\rho \bigl( \psi_M (z\ot_R m) \bigl) = \psi_C (z\ot_R m{<-1>})
\ot_R m_{<0>}
\end{equation}

In particular, for $ (M, \rho) = (C, \Delta)$ we obtain that
$\psi_C$ is a morphism in ${}_{R}^C\mathcal{M}_R^C$ and if we
apply $\varepsilon_C$ to the first position in \equref{160} we get
\begin{equation}\eqlabel{161}
\psi_M (z\ot_R m) = \varepsilon_C \bigl(\psi_C (z\ot_R m{<-1>})
\bigl) m_{<0>}
\end{equation}
for all $M \in {}_{R}^C\mathcal{M}$, $z\in C$, $m\in M$, i.e. the
first bijection from the statement.

Now, from the first part of the proof, for any $\psi_C \in
{}_R^C{\rm Hom}_R^C (Z\ot_R C, \, C)$ there exists a unique map
$h\in {}_R^{\bullet}{\rm Hom}_R (Z\ot_R C, \, R)$ such that
$\psi_C (z\ot_R c) = h(z\ot_R c_{(1)} ) c_{(2)}$, for all $z\in Z$
and $c\in C$. Using this formula for $\psi_C$, the equation
\equref{161} takes the form \equref{241} and the proof is
finished.
\end{proof}

\begin{lemma}\lelabel{2.4}
Let $R$ be a ring, $C$ an $R$-coring, $Z\in {}_{R}^C\mathcal{M}_R$
and the induction functor $Z\ot_R - : \, {}_{R}^C\mathcal{M} \to
{}_{R}^C\mathcal{M}$. Then there exists a bijection between sets
$$
\Nat \bigl( 1_{{}_{R}^C\mathcal{M}} , \, Z\ot_R -  \bigl) \, \cong
\,  {}_R^C{\rm Hom}_R^C (C, \, Z\ot_R C) \, \cong \, {}_R^C{\rm
Hom}_R (C, \, Z)
$$
Explicitly, for any natural transformation $\theta :
1_{{}_{R}^C\mathcal{M}} \to Z\ot_R -$ there exists a unique map $p
\in {}_R^C{\rm Hom}_R (C, \, Z)$ such that
\begin{equation}\eqlabel{24}
\theta_M : M \to Z\ot_R M, \quad \theta_M (m) = p (m_{<-1>}) \ot_R
m_{<0>}
\end{equation}
for all $M \in {}_{R}^C\mathcal{M}$ and $m\in M$.
\end{lemma}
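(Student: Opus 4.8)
The plan is to dualise, in the obvious way, the argument used for \leref{2.5}; the shape of the proof is identical, only the variance changes.

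\textbf{The second bijection.} I would exhibit mutually inverse maps between ${}_R^C{\rm Hom}_R^C(C,\,Z\ot_R C)$ and ${}_R^C{\rm Hom}_R(C,\,Z)$. Send $f\in{}_R^C{\rm Hom}_R^C(C,\,Z\ot_R C)$ to $p:=({\rm Id}_Z\ot_R\varepsilon_C)\circ f:C\to Z$, and send $p\in{}_R^C{\rm Hom}_R(C,\,Z)$ to the map $c\mapsto p(c_{(1)})\ot_R c_{(2)}$. Checking these land in the asserted Hom-sets is a short Sweedler computation: $p$ inherits left/right $R$-linearity and left $C$-colinearity from $f$, while for $c\mapsto p(c_{(1)})\ot_R c_{(2)}$ right $C$-colinearity is coassociativity and left $C$-colinearity uses the left colinearity of $p$ together with coassociativity. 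That the two assignments are inverse follows from the counit axioms of $C$ and the right $C$-colinearity of $f$. This is precisely the analogue of the isomorphism $\alpha$ in the proof of \leref{2.5}.

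\textbf{The first bijection.} Given a natural transformation $\theta:1_{{}_{R}^C\mathcal{M}}\to Z\ot_R-$, I would focus on $\theta_C:C\to Z\ot_R C$ — which a priori is only a morphism in ${}_{R}^C\mathcal{M}$, i.e. left $R$-linear and left $C$-colinear — and prove it is in fact a morphism in ${}_{R}^C\mathcal{M}_R^C$ that determines $\theta$. Naturality with respect to $f_r:C\to C$, $f_r(c)=cr$ (a morphism in ${}_{R}^C\mathcal{M}$) gives that $\theta_C$ is right $R$-linear. For $N\in{}_R\mathcal{M}$ and $n\in N$, naturality with respect to $f_n:C\to C\ot_R N$, $f_n(c)=c\ot_R n$ (again a morphism in ${}_{R}^C\mathcal{M}$), yields $\theta_{C\ot_R N}=\theta_C\ot_R{\rm Id}_N$ — and the right $R$-linearity just obtained is exactly what makes $\theta_C\ot_R{\rm Id}_N$ well defined. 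Now apply naturality to the coaction $\rho_M:M\to C\ot_R M$, always a morphism in ${}_{R}^C\mathcal{M}$, and use the previous identity for $\theta_{C\ot_R M}$; evaluating at $m\in M$ gives
$$
({\rm Id}_Z\ot_R\rho_M)\bigl(\theta_M(m)\bigr)=\theta_C(m_{<-1>})\ot_R m_{<0>}.
$$
Taking $M=C$, $\rho_C=\Delta$ shows $({\rm Id}_Z\ot_R\Delta)\circ\theta_C=(\theta_C\ot_R{\rm Id}_C)\circ\Delta$, i.e. $\theta_C$ is right $C$-colinear, so $\theta_C\in{}_R^C{\rm Hom}_R^C(C,\,Z\ot_R C)$. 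Applying ${\rm Id}_Z\ot_R\varepsilon_C\ot_R{\rm Id}_M$ to the displayed identity and using the counit axiom $(\varepsilon_C\ot_R{\rm Id}_M)\circ\rho_M={\rm Id}_M$ recovers
$$
\theta_M(m)=\bigl[({\rm Id}_Z\ot_R\varepsilon_C)\circ\theta_C\bigr](m_{<-1>})\ot_R m_{<0>},
$$
so $\theta$ is uniquely determined by $\theta_C$; combining with the second bijection yields formula \equref{24} with $p:=({\rm Id}_Z\ot_R\varepsilon_C)\circ\theta_C$.

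\textbf{Converse and the main obstacle.} For the other direction, given $p\in{}_R^C{\rm Hom}_R(C,\,Z)$ I would set $\theta_M(m):=p(m_{<-1>})\ot_R m_{<0>}$ and verify: each $\theta_M$ is a morphism in ${}_{R}^C\mathcal{M}$ (left $R$-linearity from that of $p$ and of the coaction; left $C$-colinearity from left colinearity of $p$ and coassociativity of the comodule); $M\mapsto\theta_M$ is natural (immediate from colinearity of morphisms of comodules); and the two constructions $\theta\mapsto p$ and $p\mapsto\theta$ invert each other (the nontrivial round trip uses the counit axiom of $C$). I expect no conceptual obstacle, only bookkeeping: one must keep careful track of which Sweedler leg lands in $Z$, in $C$, or in $M$, and — this is the one genuinely essential point — exploit that objects of ${}_{R}^C\mathcal{M}$ carry no a priori right $R$-action or right $C$-coaction, so that the right-module and right-comodule structure of $\theta_C$ really has to be manufactured from naturality against the maps $f_r$, $f_n$ and $\rho_M$, and one must check the intermediate identity $\theta_{C\ot_R N}=\theta_C\ot_R{\rm Id}_N$ is consistent with the final formula.
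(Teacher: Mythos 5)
Your proof is correct and follows exactly the route the paper intends: the paper's own proof of this lemma simply states the Hom-tensor bijection $\beta$ with inverse $(\mathrm{Id}_Z\otimes_R\varepsilon_C)\circ(-)$ and then declares the rest "analogous to Lemma 2.5, details left to the reader," and the details you supply (naturality against $f_r$, $f_n$ and $\rho_M$, the identity $\theta_{C\otimes_R N}=\theta_C\otimes_R\mathrm{Id}_N$, and the counit argument recovering $\theta$ from $\theta_C$) are precisely the dualised steps of that proof. No gaps.
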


\begin{proof} The proof is analogous to \leref{2.5}. The second bijection
is given by Hom-tensor type relations (\cite{BrzezinskiWisbauer}):
the map
\begin{equation}\eqlabel{261}
\beta : {}_R ^C{\rm Hom}_R (C , \, Z ) \to {}_R ^C{\rm Hom}_R^C (C
, \, Z\ot_R C ), \quad \beta (p) (c) :=  p (c_{(1)}) \ot_R c_{(2)}
\end{equation}
for all $p \in {}_R ^C{\rm Hom}_R (C , \, Z )$, $c\in C$ is
bijective with the inverse
$$
\beta^{-1} (g) := (Id_Z \ot_R \varepsilon_C)\circ g
$$
for all $g\in {}_R ^C{\rm Hom}_R^C (C , \, Z\ot_R C )$. Thus any
$\theta_C \in  {}_R^C{\rm Hom}_R^C (C, \, Z\ot_R C)$ has the form
$\theta_C (c) = p (c_{(1)}) \ot_R c_{(2)}$, for a unique $p \in
{}_R ^C{\rm Hom}_R (C , \, Z )$.

Now, let $\theta : 1_{{}_{R}^C\mathcal{M}} \to Z\ot_R - $ be a
natural transformation. Using exactly the same steps from the
proof of \leref{2.5} we can prove that $\theta_C : C \to Z\ot_R C$
is in fact a morphism in ${}_{R}^C\mathcal{M}_R^C$ and $\theta$ is
uniquely determined by $\theta_C$. Details are left to the reader.
\end{proof}

\begin{corollary}\colabel{2.6}
Let $R$ be a ring, $C$ an $R$-coring and $Z\in
{}_{R}^C\mathcal{M}_R$. The following are equivalent:
\begin{enumerate}
\item The induction functor $Z\otimes_R - : {}_{R}^C\mathcal{M}
\to {}_{R}^C\mathcal{M}$ is isomorphic to the identity functor
$1_{{}_{R}^C\mathcal{M}}$ of the category ${}_{R}^C\mathcal{M}$;

\item There exists an isomorphism $Z\ot_R C \cong C$ in the
category ${}_{R}^C\mathcal{M}_R^C$ of $C$-bicomodules;

\item There exists a pair $(p, h)$, where $p \in {}_R^C{\rm Hom}_R
(C, \, Z)$, $h\in {}_R^{\bullet}{\rm Hom}_R (Z\ot_R C, \, R)$ such
that
\begin{equation}\eqlabel{334}
h\bigl( p(c_{(1)}) \ot_R c_{(2)} \bigl) = \varepsilon (c), \quad
h( z \ot_R c_{(1)}) p (c_{(2)}) = z \, \varepsilon (c)
\end{equation}
for all $c\in C$, $z\in Z$.
\end{enumerate}
\end{corollary}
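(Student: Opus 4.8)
The proof is a routine assembly of the two Lemmas just proved, so the plan is to establish the cycle $(1)\Rightarrow(3)\Rightarrow(2)\Rightarrow(1)$, keeping track of how a natural isomorphism between the induction functor and the identity unpacks into the pair $(p,h)$.

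First I would prove $(1)\Leftrightarrow(2)$ directly as the ``functorial'' part. If $\Phi : Z\ot_R - \to 1_{{}_R^C\Mm}$ is a natural isomorphism, then in particular $\Phi_C : Z\ot_R C \to C$ is an isomorphism in ${}_R^C\Mm$; by the argument in \leref{2.5} (the part showing $\psi_C$ is automatically a $C$-bicomodule map) $\Phi_C$ lies in ${}_R^C{\rm Hom}_R^C(Z\ot_R C,C)$, and since it is bijective it is an isomorphism in ${}_R^C\Mm_R^C$, giving (2). Conversely, given an isomorphism $g : Z\ot_R C \cong C$ in ${}_R^C\Mm_R^C$, I would produce the natural transformation $\psi$ via \leref{2.5} by taking $h := \alpha^{-1}(g) = \varepsilon_C\circ g$ (up to the $c_{(1)}\ot c_{(2)}$ rewriting), so that $\psi_M(z\ot_R m) = h(z\ot_R m_{<-1>})m_{<0>}$; the companion map in the opposite direction comes from \leref{2.4} applied to the inverse $g^{-1}$, and one checks these are mutually inverse natural transformations, giving (1). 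This already shows the two functorial conditions agree.

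Next, the bridge to (3): a natural transformation $Z\ot_R - \to 1_{{}_R^C\Mm}$ is an isomorphism precisely when it has a two-sided inverse, and by \leref{2.5} and \leref{2.4} such transformations are classified by $h\in {}_R^\bullet{\rm Hom}_R(Z\ot_R C, R)$ and $p\in {}_R^C{\rm Hom}_R(C,Z)$ respectively, via \equref{241} and \equref{24}. So I would compute the two composites $\theta\circ\psi$ and $\psi\circ\theta$ on an arbitrary $M\in {}_R^C\Mm$, evaluating on $z\ot_R m$ and on $m$ respectively, using the coassociativity/counit axioms of $C$ together with the comodule axiom $m_{<-1>}\ot m_{<0><-1>}\ot m_{<0><0>} = \Delta(m_{<-1>})\ot m_{<0>}$. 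The composite $(\psi\circ\theta)_M(m) = h\big(p(m_{<-1>})\ot_R m_{<0><-1>}\big)m_{<0><0>}$ collapses (after applying coassociativity to $m_{<-1>}$ and then the counit) to $h\big(p(c_{(1)})\ot_R c_{(2)}\big)$ evaluated against $m$; requiring this to equal $m$ for all $M,m$ — testing on $M=C$ with $m=c$ suffices by the Yoneda-type faithfulness of $Y_1$ in \prref{2.1} — yields exactly the first identity in \equref{334}. Symmetrically $(\theta\circ\psi)_M$ reduces to the second identity $h(z\ot_R c_{(1)})p(c_{(2)}) = z\,\varepsilon(c)$. Thus $(1)\Leftrightarrow(3)$, and combined with the first paragraph the three conditions are equivalent.

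The only mildly delicate point — the ``main obstacle'' — is bookkeeping: making sure that when one passes an isomorphism back and forth through the bijections $\alpha$, $\beta$ of \leref{2.5}–\leref{2.4}, the $h$ recovered from $\psi$ and the $p$ recovered from $\theta=\psi^{-1}$ are genuinely an inverse pair in the sense of \equref{334}, rather than merely each giving a natural transformation. This is handled by evaluating the composites at the universal element: by \prref{2.1}(1) a natural transformation $V\ot_S - \to W\ot_S -$ is determined by its value at $S$ (here, a natural endotransformation of $1_{{}_R^C\Mm}$ composed with $Z\ot_R-$ is determined by its value at $C$ on $\Id_C$, i.e.\ on elements $c\in C$), so the identities \equref{334} are exactly the conditions ``$(\psi\circ\theta)_C = \Id$ and $(\theta\circ\psi)_C = \Id$'' written out, with no loss of generality. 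Everything else is a direct substitution using the explicit formulas \equref{241} and \equref{24} and the coring axioms.
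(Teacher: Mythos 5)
Your proposal is correct and follows essentially the same route as the paper: both arguments reduce everything to \leref{2.5} and \leref{2.4}, identify the components $\psi_C$ and $\theta_C$ as mutually inverse bicomodule isomorphisms, and translate the condition that $\psi$ and $\theta$ are inverse natural transformations into the identities \equref{334} via the Hom-tensor bijections $\alpha$, $\beta$ (testing at $M=C$ and applying the counit). The only difference is cosmetic — you organize the equivalences as $(1)\Leftrightarrow(2)$ and $(1)\Leftrightarrow(3)$ rather than the paper's cycle, and you write out the composite computations that the paper leaves implicit.
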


\begin{proof}
$(1) \Rightarrow (2)$ Let $\theta : 1_{{}_{R}^C\mathcal{M}} \to
Z\ot_R - $ and $\psi : Z\ot_R - \to 1_{{}_{R}^C\mathcal{M}}$ be a
pair of natural transformations inverse each to other. From the
proof of \leref{2.4} and \leref{2.5}, $\theta_C$ and $\psi_C$ are
isomorphisms inverse to each other between $C$ and $Z\ot_R C$ in
the category ${}_{R}^C\mathcal{M}_R^C$.

$(2) \Rightarrow (3)$ The pair of maps $(p, h)$ satisfying
\equref{334} parameterizes the isomorphisms between $C$ and
$Z\ot_R C$ in the category ${}_{R}^C\mathcal{M}_R^C$ using
Hom-tensor type relations from the proofs of \leref{2.4} and
\leref{2.5}.

$(3) \Rightarrow (1)$ Let $(p, h)$ be such a pair of maps. Then
$$\theta_M : M \to Z\ot_R M, \quad \theta_M (m) = p (m_{<-1>})
\ot_R m_{<0>}
$$
for all $M \in {}_{R}^C\mathcal{M}$ and $m\in M$ is a natural
isomorphism between the functors $1_{{}_{R}^C\mathcal{M}}$ and
$Z\ot_R - $ with the inverse
$$
\psi_M : Z\ot_R M \to M, \quad \psi_M (z \ot_R m) = h ( z \ot_R
m_{<-1>})m_{<0>}
$$
for all $M \in {}_{R}^C\mathcal{M}$, $m\in M$ and $z\in Z$.
\end{proof}

\section{Representable functors for corings.
Applications}\selabel{3}

In this section we shall use all technical results proven before
in order to obtain the main theorems of the paper. First we shall
give an answer to Problem A:

\begin{theorem}\thlabel{3.2}
Let $R$, $S$ be rings, $C$ an $R$-coring and ${\bf Rep}\, (
{}_{R}^C{\mathcal M}, {}_{S}{\mathcal M} )$ be the category of all
representable functors ${}_{R}^C{\mathcal M} \to {}_{S}{\mathcal
M}$. Then the functor
$$
Y : ({}_{R}^C\mathcal{M}_S )^{{\rm op}} \to {\bf Rep} \,
({}_{R}^C{\mathcal M}, {}_{S}{\mathcal M} ), \quad Y(V):=
{}_R^C{\rm Hom} (V, - )
$$
is an equivalence of categories.
\end{theorem}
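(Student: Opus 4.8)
The plan is to verify that the functor $Y$ is well-defined, fully faithful, and essentially surjective. The well-definedness requires checking that ${}_R^C{\rm Hom}(V,-)$ really is a representable functor into ${}_S{\mathcal M}$ in the sense of the Definition; that is, that composing with the forgetful functor $\gamma : {}_S{\mathcal M} \to {\mathcal Set}$ yields something naturally isomorphic to $\Hom_{{}_{R}^C{\mathcal M}}(C\ot_R V, -)$ for a suitable object $C\ot_R V \in {}_{R}^C{\mathcal M}$. Here I would invoke \thref{3.1}: the functor ${}_R^C{\rm Hom}(V,-)$ has a left adjoint, namely $V\ot_S -$, so by the Eilenberg--Watts type result it is a right adjoint, hence (by the Freyd/Bergman criterion quoted in the Introduction, since ${}_{R}^C{\mathcal M}$ is cocomplete) it is representable. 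More concretely, one has the adjunction isomorphism ${}_R^C{\rm Hom}(V\ot_S S, M) \cong {}_S\Hom\bigl(S, {}_R^C{\rm Hom}(V,M)\bigr)$ and $V\ot_S S \cong V$, so the representing object in ${}_{R}^C{\mathcal M}$ is simply $V$ itself, viewed as a left $C$-comodule by forgetting the right $S$-structure; the $S$-module structure on ${}_R^C{\rm Hom}(V,M)$ refines the underlying set $\Hom_{{}_{R}^C{\mathcal M}}(V,M)$.

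Next I would establish that $Y$ is fully faithful. This is precisely part (2) of \prref{2.1}: the functor $Y_2 : ({}_{R}^C{\mathcal M}_S)^{\rm op} \to \ul{Functors}\,({}_{R}^C{\mathcal M}, {}_S{\mathcal M})$, $V \mapsto {}_R^C{\rm Hom}(V,-)$, is faithful and full. Since $Y$ is obtained from $Y_2$ by corestricting the target from $\ul{Functors}\,({}_{R}^C{\mathcal M},{}_S{\mathcal M})$ to its full subcategory ${\bf Rep}\,({}_{R}^C{\mathcal M},{}_S{\mathcal M})$, and the image of $Y_2$ lands inside that subcategory by the previous paragraph, $Y$ inherits full faithfulness directly. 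So there is essentially nothing new to prove here beyond citing \prref{2.1}.

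The remaining and genuinely substantive step is essential surjectivity: given any representable functor $F : {}_{R}^C{\mathcal M} \to {}_S{\mathcal M}$, I must produce $V \in {}_{R}^C{\mathcal M}_S$ with $F \cong {}_R^C{\rm Hom}(V,-)$ in $\ul{Functors}\,({}_{R}^C{\mathcal M},{}_S{\mathcal M})$. The idea: since $\gamma\circ F$ is representable and ${}_{R}^C{\mathcal M}$ is cocomplete, $F$ (as a Set-valued functor, hence also as an ${}_S{\mathcal M}$-valued functor, since limits in ${}_S{\mathcal M}$ are computed on underlying sets) preserves all limits, so by the adjoint functor theorem $F$ has a left adjoint $L : {}_S{\mathcal M} \to {}_{R}^C{\mathcal M}$. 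Now apply \thref{3.1} to the pair $(L, F)$: there exists $V \in {}_{R}^C{\mathcal M}_S$, unique up to isomorphism, with $L \cong V\ot_S -$ and $F \cong {}_R^C{\rm Hom}(V,-)$. This gives both essential surjectivity and, as a byproduct, reconfirms the uniqueness. The one point needing care — and the main obstacle — is checking that a representable ${}_S{\mathcal M}$-valued functor preserves limits in the appropriate sense (not just that $\gamma\circ F$ preserves them in ${\mathcal Set}$) so that \thref{3.1} genuinely applies; this follows because the forgetful functor $\gamma$ is faithful and creates limits, and ${}_S{\mathcal M}$ is complete, so a limit cone in ${}_{R}^C{\mathcal M}$ is sent by $F$ to a cone that is a limit after applying $\gamma$, hence a limit. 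Assembling these three parts — well-definedness via \thref{3.1}, full faithfulness via \prref{2.1}, essential surjectivity via the adjoint functor theorem plus \thref{3.1} again — completes the proof.
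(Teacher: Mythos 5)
Your proposal is correct and follows essentially the same route as the paper: full faithfulness via part (2) of \prref{2.1}, and essential surjectivity by observing that ${}_{R}^C{\mathcal M}$ is cocomplete, so a representable functor is a right adjoint, whence \thref{3.1} produces the bimodule $V$. The only caveat is your appeal to ``the adjoint functor theorem'' in the surjectivity step, which as stated would still require a solution-set condition; it is cleaner to cite directly the Freyd--Bergman criterion (representable $\Leftrightarrow$ right adjoint, for functors from a cocomplete category to a variety of algebras) that you already invoke for well-definedness, and this is exactly what the paper does.
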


\begin{proof}
It follows from $(2)$ of \prref{2.1} that $Y$ is a faithful and
full functor. Let $G \in {\bf Rep}\, ( {}_{R}^C{\mathcal M},
{}_{S}{\mathcal M} )$ be a representable functor.
${}_{R}^C{\mathcal M}$ is a cocomplete category, as it has all
coproducts and coequalizers \cite[Proposition
18.13]{BrzezinskiWisbauer}; thus we can apply Freyd's theorem
\cite[Theorem 8.14]{bergman} to obtain that $G$ is a right
adjoint. Using \thref{3.1} we get that $G \cong {}_R^C {\rm Hom}
(V, -) = Y(V)$, for some $V \in {}_{R}^C{\mathcal M}_S$, i.e. $Y$
is surjective on objects. Thus $Y$ is an equivalence of
categories.
\end{proof}

The following question seems to be hopeless: \textit{Let $R$, $S$
be rings, $C$ an $R$-coring. Describe the category } ${\bf Rep}\,
({}_{S}{\mathcal M}, \, {}_{R}^C{\mathcal M} )$ \textit{of all
representable functors} ${}_{S}{\mathcal M} \to {}_{R}^C{\mathcal
M}$.

We shall indicate now an answer for Problem C:

\begin{corollary}\colabel{proC}
Let $R$, $S$, $T$ be rings, $C$ an $R$-coring and $F :
{}_{R}^C{\mathcal M} \to {}_{S}{\mathcal M}$, $G : {}_{S}{\mathcal
M} \to {}_{T}{\mathcal M}$ representable functors. Then $G\circ F
: {}_{R}^C{\mathcal M} \to {}_{T}{\mathcal M}$ is a representable
functor.
\end{corollary}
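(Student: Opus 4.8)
The plan is to reduce Corollary~\ref{co:proC} to \thref{3.2} together with the classical Problem~C result for module categories that was recalled in the introduction. First I would use \thref{3.2}: since $F : {}_{R}^C{\mathcal M} \to {}_{S}{\mathcal M}$ is representable, there is an object $V \in {}_{R}^C{\mathcal M}_S$ with $F \cong {}_R^C{\rm Hom}(V, -)$. Since $G : {}_{S}{\mathcal M} \to {}_{T}{\mathcal M}$ is a representable functor between module categories, the classical description (\cite[Theorem~13.15]{bergman}, or equivalently the Eilenberg--Watts theorem via \thref{3.1} applied to the trivial coring $C = S$) gives a $(T,S)$-bimodule $U$ with $G \cong {}_S{\rm Hom}(U, -)$, and $U$ is finitely generated projective as a right $S$-module because $G$ has a left adjoint $U \otimes_S -$.

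Next I would exhibit the composite explicitly. The natural candidate is that $G \circ F \cong {}_R^C{\rm Hom}(V \otimes_T U, -)$ for an appropriate object $V \otimes_T U \in {}_{R}^C{\mathcal M}_S$ --- wait, the bimodule structures do not match that way; rather one wants to transport $U$ back along $V$. The cleaner route is to observe $G \circ F \cong {}_S{\rm Hom}(U, -) \circ {}_R^C{\rm Hom}(V, -)$ and use the Hom-tensor adjunction in the second variable: for $M \in {}_{R}^C{\mathcal M}$,
$$
{}_S{\rm Hom}\bigl(U, {}_R^C{\rm Hom}(V, M)\bigr) \cong {}_R^C{\rm Hom}\bigl(U \otimes_S V, M\bigr),
$$
where $U \otimes_S V$ --- again the sides are wrong; what is needed is ${}_R^C{\rm Hom}(V, M)$ viewed as a left $S$-module, so the adjunction produces ${}_R^C{\rm Hom}(U \otimes_{?}\, V, -)$ only after identifying which tensor factor carries the $C$-comodule and $R$-module structure. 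The correct object is the one obtained by tensoring $V \in {}_{R}^C{\mathcal M}_S$ on the right by the $(S,T)$-bimodule $^{*}U := {}_S{\rm Hom}(U, S)$ (finitely generated projective right $S$-module hypothesis makes $U \otimes_S - \cong {}_S{\rm Hom}({}^{*}U, -)$ fail in general --- so instead use that $G$ being representable means $G \cong {}_S{\rm Hom}(U,-)$ with no projectivity needed for the identity below).

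Concretely, I would set $W := {}_R^C{\rm Hom}$-compatible object $V \otimes_S U$ with $U$ regarded as $(S,T)$-bimodule --- but $U$ was a $(T,S)$-bimodule, so one forms the $(R,T)$-bimodule-with-$C$-coaction $V' \in {}_{R}^C{\mathcal M}_T$ given by $V' := V \otimes_S U$ is not a $C$-comodule-compatible construction. The honest approach, and the one I expect the author takes, is simply: $G \dashv G'$ and $F \dashv F'$ for suitable induction functors (by \thref{3.1} and its module-category specialization), hence $G \circ F$ has left adjoint $F' \circ G'$, hence by Freyd's theorem (\cite[Theorem~8.14]{bergman}) $G \circ F$ is representable, provided ${}_{R}^C{\mathcal M}$ is cocomplete --- which it is, being additive with all coproducts and coequalizers (\cite[Proposition~18.13]{BrzezinskiWisbauer}). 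This avoids all bimodule bookkeeping.

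So the actual proof I would write is short: by \thref{3.2}, $F$ is a right adjoint, say $F \cong {}_R^C{\rm Hom}(V,-)$ with left adjoint $V \otimes_S -$; by the Eilenberg--Watts theorem for module categories (the $C = S$ trivial-coring case of \thref{3.1}, or \cite[Theorem~13.15]{bergman}), $G$ is a right adjoint with left adjoint, say, $U \otimes_T -$. Then $G \circ F$ is a right adjoint, with left adjoint $(V \otimes_S -) \circ (U \otimes_T -)$. Since ${}_{R}^C{\mathcal M}$ is cocomplete, Freyd's theorem \cite[Theorem~8.14]{bergman} applies and gives that $G \circ F$ is representable. The main obstacle is purely presentational: one must be careful that ``representable'' for the target variety ${}_{T}{\mathcal M}$ (resp. ${}_{S}{\mathcal M}$) coincides with ``being a right adjoint'' in the relevant cocomplete domain category, and that the composite of two right adjoints is a right adjoint --- both standard, so the proof is essentially a two-line invocation of \thref{3.2} and Freyd's theorem.
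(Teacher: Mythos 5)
Your final argument is correct, and it reaches the conclusion by a slightly different route than the paper. The paper's proof is constructive: it uses \thref{3.2} to write $F \cong {}_R^C{\rm Hom}(V,-)$ with $V \in {}_{R}^C{\mathcal M}_S$ and (via the trivial-coring case of \thref{3.2}) $G \cong {}_S{\rm Hom}(W,-)$ with $W \in {}_{S}{\mathcal M}_T$, and then exhibits the representing object of the composite explicitly through the Hom-tensor adjunction
$$
{}_S{\rm Hom}(W,-)\circ {}_R^C{\rm Hom}(V,-) \cong {}_R^C{\rm Hom}(V\otimes_S W,-),
$$
where $V\otimes_S W \in {}_{R}^C{\mathcal M}_T$ carries the $C$-coaction coming from $V$. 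You instead argue abstractly: both functors are right adjoints by Freyd's criterion (each domain being cocomplete), the composite of right adjoints is a right adjoint, and Freyd's theorem applied to the cocomplete category ${}_{R}^C{\mathcal M}$ then yields representability of $G\circ F$. That is a valid proof and arguably shorter; what it gives up is the explicit identification of the representing object, which your own computation of the composite left adjoint $(V\otimes_S -)\circ(U\otimes_T -) \cong (V\otimes_S U)\otimes_T -$ recovers anyway and which agrees with the paper's $V\otimes_S W$. One presentational remark: the first two-thirds of your write-up consists of abandoned attempts with the bimodule sides crossed (e.g.\ ${}_R^C{\rm Hom}(U\otimes_S V, M)$, $V\otimes_T U$); since you explicitly discard them and the final paragraph is self-contained and correct, this does not affect validity, but the submitted proof should contain only that last paragraph, with $U$ consistently an $(S,T)$-bimodule so that $U\otimes_T -$ lands in ${}_{S}{\mathcal M}$.
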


\begin{proof} \thref{3.2} gives that there exists
$V \in {}_{R}^C{\mathcal M}_S$ such that $ F \cong {}_R^C{\rm Hom}
(V, - )$. We apply once again \thref{3.2} for the trivial coring
$C = R$ and we obtain that there exists $W \in {}_{S}{\mathcal
M}_T$ such that $G \cong {}_S{\rm Hom} (W, - )$. Now the proof
follows from \thref{3.2} taking into account that there exists a
natural isomorphism of functors given by the Hom-tensor adjunction
$$
{}_S{\rm Hom} (W, - ) \circ {}_R^C{\rm Hom} (V, - ) \cong
{}_R^C{\rm Hom} (V\otimes_S W,  - )
$$
where $V\otimes_S W  \in {}_{R}^C{\mathcal M}_T$ with the left
$C$-coaction implemented by the coaction on $V$.
\end{proof}

\textbf{The induction functor. }Let $R$, $S$ be rings, $C$ an
$R$-coring and $U \in {}_{S}\mathcal{M}_R$. In the last part of
the paper we shall give necessary and sufficient conditions for
the induction functor
$$
U\otimes_R - : {}_{R}^C\mathcal{M} \to
{}_{S}\mathcal{M}
$$
to be: a representable functor, an equivalence of categories, a
separable or a Frobenius functor.

\begin{example}\exlabel{exem} Let us give the motivation for the first
problem. Consider  $U : = S = R$. Then $R\otimes_R - \cong F$,
where $F : {}_{R}^C\mathcal{M} \to {}_{R}\mathcal{M}$ is the
forgetful functor. We have the adjoint pairs of functors:
$$
F = R\otimes_R - \, \dashv C\otimes_R - \, \dashv {}_R^C {\rm Hom}
(C, -)
$$
Now \cite[Theorem 4.1]{Brz:cor} gives three necessary and
sufficient conditions for the forgetful functor $F \cong
R\otimes_R -$ to be a Frobenius functor. This can be restated as
$F \cong R\otimes_R -$ is a representable functor having $C$ as an
object of representability. In the following we shall address the
general case of an arbitrary induction functor; moreover we shall
not impose restrictive conditions regarding the object of
representability.
\end{example}

First we shall give necessary and sufficient conditions for $
U\otimes_R - $ to be an equivalence of categories. Morita type
theorems for categories of comodules over corings where also
proved in \cite{bv}. The next theorem is not a special case of
them. The proof we give is elementary, being based \thref{3.1} and
\coref{2.6}.

\begin{theorem}\thlabel{3.3}
Let $C$ be an $R$-coring and $U \in {}_{S}\mathcal{M}_R$. The
following are equivalent:
\begin{enumerate}
\item $U\otimes_R - : {}_{R}^C\mathcal{M} \to {}_{S}\mathcal{M}$
is an equivalence of categories;

\item There exists $V \in {}_{R}^C{\mathcal M}_S$ such that:
\begin{enumerate}
\item[(i)] $U\otimes_R V \cong S$, isomorphism in ${}_{S}{\mathcal
M}_{S}$.

\item[(ii)] $V\otimes_S U \otimes_R C \cong C$, isomorphism in
${}_{R}^C{\mathcal M}_{R}^C$;
\end{enumerate}

\item There exists a triple $(V, p, h)$, where $V \in
{}_{R}^C{\mathcal M}_S$, $p \in {}_R^C {\rm Hom}_R (C, V\otimes_S
U)$, $h \in {}_R{\rm Hom}_R (V\otimes_S U\otimes_R C, R)$ such
that:
\begin{enumerate}
\item[(i)] $U\otimes_R V \cong S$, isomorphism in ${}_{S}{\mathcal
M}_{S}$;

\item[(ii)] $v_{<-1>} h ( v_{<0>} \otimes_S u \otimes_R c ) = h (v
\otimes_S u \otimes_R c_{(1)} ) c_{(2)}$;

\item[(iii)] $h(p (c_{(1)}) \otimes_R c_{(2)}) = \varepsilon (c)$;

\item[(iv)] $h (v \otimes_S u \otimes_R c_{(1)} ) p(c_{(2)}) = v
\otimes_S u \varepsilon(c)$
\end{enumerate}
for all $v\in V$, $u\in U$, $c\in C$.
\end{enumerate}
\end{theorem}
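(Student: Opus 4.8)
The strategy is to reduce everything to the Eilenberg--Watts type description in \thref{3.1} together with the parametrization of natural transformations from \coref{2.6}. The functor $U\otimes_R-$ has an obvious right adjoint; the point of the whole theorem is that being an equivalence is controlled by the \emph{unit} and \emph{counit} of that adjunction being isomorphisms, and each of these two conditions can be made completely explicit. I would organize the proof as a cycle $(1)\Rightarrow(2)\Rightarrow(3)\Rightarrow(1)$, so that the passage between the ``abstract'' isomorphism conditions in $(2)$ and the ``element-wise'' data $(V,p,h)$ in $(3)$ is handled once and for all by the Hom--tensor relations already set up in \seref{2}.

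\textbf{Step 1: $(1)\Rightarrow(2)$.} Suppose $U\otimes_R-$ is an equivalence. It is in particular a left adjoint, so by \thref{3.1} its right adjoint has the form ${}_R^C{\rm Hom}(V,-)$ for a unique $V\in{}_{R}^C{\mathcal M}_S$; equivalently, the inverse equivalence is $V\otimes_S-$ up to the natural isomorphism relating ${}_R^C{\rm Hom}(V,-)$ to $V\otimes_S-$ when $U\otimes_R-\dashv{}_R^C{\rm Hom}(V,-)$ is an adjoint \emph{equivalence}. Then the unit $1_{{}_{S}{\mathcal M}}\xrightarrow{\ \sim\ }(V\otimes_S-)\circ(U\otimes_R-)$ evaluated at $S$ gives $S\cong V\otimes_S U\otimes_R ?$--more precisely, one gets $U\otimes_R V\cong S$ in ${}_{S}{\mathcal M}_S$ from the composite the other way around (evaluate the appropriate unit/counit at the regular bimodules), which is condition (i). Similarly the counit $(U\otimes_R-)\circ(V\otimes_S-)\xrightarrow{\ \sim\ }1$ evaluated at $C$, combined with the fact that $V\otimes_S-$ lands in ${}_{R}^C{\mathcal M}$, forces $V\otimes_S U\otimes_R C\cong C$ in ${}_{R}^C{\mathcal M}_{R}^C$, which is (ii). Here I would be careful that the isomorphisms are in the right categories: the $S$-bimodule structure on $U\otimes_R V$ and the $C$-bicomodule structure on $V\otimes_S U\otimes_R C$ are exactly the ones spelled out in \seref{1}.

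\textbf{Step 2: $(2)\Rightarrow(3)$.} This is the bookkeeping step. Given $V$ with (i) and (ii), set $Z:=V\otimes_S U\in{}_{R}^C{\mathcal M}_R$ and apply \coref{2.6} to the induction functor $Z\otimes_R-:{}_{R}^C{\mathcal M}\to{}_{R}^C{\mathcal M}$: condition (ii) says precisely $Z\otimes_R C\cong C$ in ${}_{R}^C{\mathcal M}_{R}^C$, so by $(2)\Leftrightarrow(3)$ of \coref{2.6} there is a pair $(p,h)$ with $p\in{}_R^C{\rm Hom}_R(C,Z)={}_R^C{\rm Hom}_R(C,V\otimes_S U)$ and $h\in{}_R^{\bullet}{\rm Hom}_R(Z\otimes_R C,R)$ satisfying \equref{334}. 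Unwinding $Z=V\otimes_S U$, the three equations of \equref{334} together with the defining compatibility \equref{100} of ${}_R^{\bullet}{\rm Hom}_R$ become exactly conditions (ii), (iii), (iv) of part $(3)$ of the theorem; condition (i) is carried over unchanged. Conversely, $(3)\Rightarrow(2)$ just runs \coref{2.6} backwards: conditions (ii)--(iv) plus the $R$-bimodule-map property of $h$ are exactly what \coref{2.6}$(3)$ demands for $Z=V\otimes_S U$, yielding $Z\otimes_R C\cong C$, i.e.\ (ii), while (i) is again untouched.

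\textbf{Step 3: $(3)\Rightarrow(1)$, the main obstacle.} This is the substantive direction. From $(V,p,h)$ I build the candidate quasi-inverse $V\otimes_S-:{}_{S}{\mathcal M}\to{}_{R}^C{\mathcal M}$ and must exhibit natural isomorphisms $(U\otimes_R-)\circ(V\otimes_S-)\cong 1_{{}_{S}{\mathcal M}}$ and $(V\otimes_S-)\circ(U\otimes_R-)\cong 1_{{}_{R}^C{\mathcal M}}$. The first is easy: $(U\otimes_R-)\circ(V\otimes_S-)\cong (U\otimes_R V)\otimes_S-\cong S\otimes_S-\cong 1$ by condition (i). The second is the heart of the matter. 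By \coref{2.6}, conditions (ii)--(iv) give that $Z\otimes_R-=(V\otimes_S U)\otimes_R-\cong 1_{{}_{R}^C{\mathcal M}}$ as endofunctors of ${}_{R}^C{\mathcal M}$, with the explicit mutually inverse natural transformations $\theta_M(m)=p(m_{<-1>})\otimes_R m_{<0>}$ and $\psi_M(z\otimes_R m)=h(z\otimes_R m_{<-1>})m_{<0>}$. Since $(V\otimes_S-)\circ(U\otimes_R-)\cong(V\otimes_S U)\otimes_R-$ naturally, chaining this with the \coref{2.6} isomorphism yields $(V\otimes_S-)\circ(U\otimes_R-)\cong 1_{{}_{R}^C{\mathcal M}}$, completing the proof that $U\otimes_R-$ is an equivalence. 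The delicate point to watch is that the isomorphism $(V\otimes_S-)\circ(U\otimes_R-)\cong (V\otimes_S U)\otimes_R-$ is genuinely natural in the ${}_{R}^C{\mathcal M}$-sense, i.e.\ respects the left $C$-coaction, since by construction the coaction on both sides is the one coming from $V$ alone; once that is checked, no further computation is needed and everything is assembled from \thref{3.1} and \coref{2.6}.
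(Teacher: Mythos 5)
Your proposal is correct and follows essentially the same route as the paper: identify the quasi-inverse as $V\otimes_S-$ via \thref{3.1}, reduce $(V\otimes_S-)\circ(U\otimes_R-)\cong 1$ to \coref{2.6} applied to $Z=V\otimes_S U$ (and the trivial-coring analogue for $U\otimes_R V\cong S$), and translate the bicomodule isomorphism $V\otimes_S U\otimes_R C\cong C$ into the pair $(p,h)$ by the Hom--tensor parametrization. The only difference is cosmetic (a cycle $(1)\Rightarrow(2)\Rightarrow(3)\Rightarrow(1)$ instead of the paper's two biconditionals), and the points you flag as delicate are exactly the ones the paper handles with \prref{2.1} and \coref{2.6}.
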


\begin{proof} $ (1) \Leftrightarrow (2) $ First we note that,
if the functor $U\otimes_R - : {}_{R}^C\mathcal{M} \to
{}_{S}\mathcal{M}$ is an equivalence of categories, then its
inverse $F$ is a left (and a right) adjoint. Using \thref{3.1} we
obtain that there exists $V \in {}_{R}^C{\mathcal M}_S$, unique up
to an isomorphism in ${}_{R}^C{\mathcal M}_S$, such that $F \cong
V\otimes _S -$. Thus $(1)$ can be restated as: $(U\otimes_R - ,
V\otimes _S -)$ is an equivalence of categories inverse each
other.

Let $F := V\ot_S -$ and $G:= U\ot_R -$. Then $F\circ G = V\ot_S U
\ot_R - = Z \ot_R -$, where $Z:= V\ot_S U \in {}_{R}^C{\mathcal
M}_{R}$. Using \coref{2.6} we obtain that $F\circ G \cong
1_{{}_{R}^C\mathcal{M}}$ if and only if $(ii)$ of $(2)$ holds. On
the other hand, $G\circ F = U\ot_R V \ot_S - = T\ot_S -$, where $T
:= U\ot_R V \in {}_{S}{\mathcal M}_{S}$. Thus, $G\circ F \cong
1_{{}_{S}\mathcal{M}} = S\ot_S - $ if and only if $U\ot_R V \cong
S$, isomorphism in ${}_{S}{\mathcal M}_{S}$.

$ (2) \Leftrightarrow (3) $ The pair of maps $(p, h)$ and the
conditions $(ii) - (iv)$ of $(3)$ give the parametrization of
isomorphisms in ${}_{R}^C{\mathcal M}_{R}^C$ between $V\otimes_S U
\otimes_R C$ and $C$ according to \coref{2.6} applied for $Z: =
V\ot_S U \in {}_{R}^C{\mathcal M}_{R}$. The condition $(ii)$ in
$(3)$ expresses the fact that $h \in {}_R^{\bullet}{\rm Hom}_R
(V\ot_S U\ot_R C, \, R)$.
\end{proof}

In order to study the representability of the induction functor
$U\otimes_R -$ we need to introduce the following concept:

\begin{definition}
Let $R$, $S$ be two rings, $C$ an $R$-coring, $U \in
{}_{S}\mathcal{M}_R$ and $V \in {}_{R}^C{\mathcal M}_S$.

A pair $(e, h)$, where $e = \sum e^1 \otimes e^2 \in
\bigl(U\otimes_R V \bigl)^S$, $h \in {}_R{\rm Hom}_R (V\otimes_S
U\otimes_R C, R)$, such that
\begin{equation}\eqlabel{db11}
v_{<-1>} h ( v_{<0>} \otimes_S u \otimes_R c ) = h (v \otimes_S u
\otimes_R c_{(1)} ) c_{(2)}
\end{equation}
\begin{equation}\eqlabel{db12}
\sum e^1 \, h ( e^2 \otimes_S u \otimes_R c ) = u \varepsilon (c)
\end{equation}
\begin{equation}\eqlabel{db13}
\sum h (v \otimes_S e^1 \otimes_R e^2_{<-1>}) e^2_{<0>} = v
\end{equation}
for all $v\in V$, $u\in U$, $c\in C$ is called a \textit{comodule
dual basis of first kind} for $(U, V)$.
\end{definition}

\begin{remarks} 1. We shall look at the module case in order to explain the
terminology. Let $C: = R$, $U \in {}_{S}\mathcal{M}_R$, $V := U^*
= {\rm Hom}_R (U, R) \in {}_{R}\mathcal{M}_S$ and $h$ the
evaluation map
$$
h := ev_U : U^* \otimes_S U \to R, \quad u^*\otimes_S u \mapsto
<u^*, u>
$$
There exists $ e = \sum_i u_i \otimes_R u_i^* \in (U\otimes_R
U^*)^S$ such that $(e, ev_U)$ is a comodule dual basis of first
kind for $(U, U^*)$ if and only if $\{u_i, u_i^* \}$ is a dual
basis for $U \in \mathcal{M}_R$. This is equivalent to $U$ is
finitely generated projective as a right $R$-module.

2. Let $(e, h)$ be a comodule dual basis of first kind for $(U,
V)$. Then $V$ is finitely generated projective as a left
$R$-module: indeed, it follows from \equref{db13} that $\{ h (?
\ot_S e^1 \ot_R e^2_{<-1>}), \, e^2_{<0>} \}$ is a dual basis for
$V$ as a left $R$-module.

3. Using \exref{exem}, \thref{3.4} below and \cite[Theorem
4.1]{Brz:cor} we obtain the following: let $U := S := R$. Then
there exists a comodule dual basis of the first type for $(R, C)$
if and only if $C$ is finitely generated projective as a left
$R$-module and the extension $R \to C^* = {\rm Hom}_R (C, R)$ is a
Frobenius extension of rings in the classical sense.
\end{remarks}

Let $X \in {}_{S}\mathcal{M}_S$. We recall two well know results
(in fact they are also special cases of \leref{2.4} and
\leref{2.5} for the trivial coring $C = R$). For any natural
transformation $\eta : 1_{{}_{S}\mathcal{M}} \to X\ot_S -$ there
exists a unique element $e \in X^S := \{ x \in X \, | \,\, sx =
xs, \forall \, s\in S \}$ such that
$$
\eta_N : N \to X\ot_S N, \quad \eta_N (n) = e \ot_S n
$$
for all $N \in {}_{S}\mathcal{M}$ and $n\in N$ and for any natural
transformation $\varphi: X\ot_S - \to 1_{{}_{S}\mathcal{M}}$ there
exists a unique map $E \in {}_S{\rm Hom}_S (X, S)$ such that
$$
\varphi_N : X\ot_S N \to N, \quad \varphi_N (x \ot_S n) = E(x)n
$$
for all $N \in {}_{S}\mathcal{M}$, $x\in X$ and $n\in N$.

Now we are ready to give an answer to Problem B for an induction
functor $U\otimes_R - : {}_{R}^C\mathcal{M} \to
{}_{S}\mathcal{M}$; \cite[Theorem 2.1]{morita} is recovered for
the trivial coring $C := R$ and \cite[Theorem 4.1]{Brz:cor} is
obtained as special case for $U := S := R$ if we predefine $C$ to
be the object of representability of the induction functor in the
next theorem.

\begin{theorem}\thlabel{3.4}
Let $C$ be an $R$-coring and $U \in
{}_{S}\mathcal{M}_R$. The following are equivalent:

\begin{enumerate}

\item The induction functor $U\otimes_R - : {}_{R}^C\mathcal{M}
\to {}_{S}\mathcal{M}$ is representable;

\item There exists $V \in {}_{R}^C{\mathcal M}_S$ such that $
V\otimes_S -$ is a left adjoint of  $U\otimes_R -$;

\item There exists $(V, e, h)$, where  $V \in {}_{R}^C{\mathcal
M}_S$ and $(e, h)$ is a comodule dual basis of first kind for $(U,
V)$.
\end{enumerate}
In this case $ U\otimes_R - \cong {}_R^C {\rm Hom} (V, -)$ and $V$
is finitely generated and projective as a left $R$-module.
\end{theorem}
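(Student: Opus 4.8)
The plan is to prove the cycle $(1) \Rightarrow (2) \Rightarrow (3) \Rightarrow (1)$, closely mirroring the structure already used for \coref{2.6} and \thref{3.3}, and then to extract the final ``in this case'' claims from the data produced along the way.

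For $(1) \Rightarrow (2)$: if $U\otimes_R -$ is representable, then since ${}_{R}^C\mathcal{M}$ is cocomplete (\cite[Proposition 18.13]{BrzezinskiWisbauer}), Freyd's theorem \cite[Theorem 8.14]{bergman} gives that $U\otimes_R -$ is a right adjoint, say of some functor $F : {}_{S}\mathcal{M} \to {}_{R}^C\mathcal{M}$. By \thref{3.1} there is $V \in {}_{R}^C\mathcal{M}_S$, unique up to isomorphism, with $F \cong V\otimes_S -$, which is exactly $(2)$.

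For $(2) \Rightarrow (3)$: assume $V\otimes_S - \dashv U\otimes_R -$ with unit $\eta : 1_{{}_{R}^C\mathcal{M}} \to (U\otimes_R -)\circ(V\otimes_S -) = U\otimes_R V \otimes_R -$, wait — more precisely the composite $U\otimes_R - \circ\, V\otimes_S - $ is $U\otimes_R V \otimes_S -$? No: one composite is on ${}_{S}\mathcal{M}$ and one on ${}_{R}^C\mathcal{M}$. Concretely, $GF = (U\otimes_R -)\circ(V\otimes_S -) = (U\otimes_R V)\otimes_S - $ as an endofunctor of ${}_{S}\mathcal{M}$, and $FG = (V\otimes_S -)\circ(U\otimes_R -) = (V\otimes_S U)\otimes_R -$ as an endofunctor of ${}_{R}^C\mathcal{M}$ with $Z := V\otimes_S U \in {}_{R}^C\mathcal{M}_R$. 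The counit $\varepsilon : FG \to 1_{{}_{R}^C\mathcal{M}}$ is a natural transformation $Z\otimes_R - \to 1_{{}_{R}^C\mathcal{M}}$, so by \leref{2.5} it is encoded by a unique $h \in {}_R^{\bullet}{\rm Hom}_R(V\otimes_S U\otimes_R C, R)$ — this gives \equref{db11}. The unit $\eta : 1_{{}_{S}\mathcal{M}} \to (U\otimes_R V)\otimes_S -$ is encoded, by the recalled module-level fact, by a unique $e = \sum e^1\otimes_R e^2 \in (U\otimes_R V)^S$. The two triangle identities \equref{1.6} then translate, after evaluating at $S$ (resp.\ at $C$) and chasing through the explicit formulas from \leref{2.5} and the $\eta$-formula, into exactly \equref{db12} and \equref{db13}. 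Conversely, $(3) \Rightarrow (2)$: given $(e,h)$, define $\eta$ and $\varepsilon$ by the stated formulas, check naturality (routine, using that $h$ is an $R$-bimodule map and \equref{db11} for the $C$-comodule compatibility of $\varepsilon$), and verify the triangle identities reduce to \equref{db12}–\equref{db13}; hence $V\otimes_S - \dashv U\otimes_R -$. Finally $(2) \Rightarrow (1)$ is immediate: a functor with a left adjoint whose target is a variety of algebras is representable by Freyd's theorem (or directly, $U\otimes_R - \cong {}_R^C{\rm Hom}(V,-)$ by \thref{3.1} and the right-hand object is manifestly representable). The ``in this case'' statements follow: $U\otimes_R - \cong {}_R^C{\rm Hom}(V,-)$ is the uniqueness half of \thref{3.1}, and the finite-generation-and-projectivity of $V$ as a left $R$-module is precisely Remark~2 following the definition, which observes that \equref{db13} exhibits $\{h(?\otimes_S e^1\otimes_R e^2_{<-1>}),\, e^2_{<0>}\}$ as a dual basis.

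The main obstacle is the bookkeeping in $(2)\Leftrightarrow(3)$: one must correctly identify which composite of functors lives over which category, apply \leref{2.5} on the ${}_{R}^C\mathcal{M}$-side and the recalled elementary fact on the ${}_{S}\mathcal{M}$-side, and then carefully evaluate the two triangle identities at the generating objects $S$ and $C$ so that they collapse to the three displayed equations \equref{db11}–\equref{db13} rather than to something weaker. Everything else is a direct appeal to \thref{3.1}, \leref{2.5}, Freyd's theorem, and the remark on dual bases.
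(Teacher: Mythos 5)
Your proposal is correct and takes essentially the same route as the paper: the equivalence of $(1)$ and $(2)$ via cocompleteness, Freyd's theorem and \thref{3.1}, and the equivalence of $(2)$ and $(3)$ by encoding the unit by an element $e \in (U\otimes_R V)^S$ and the counit by $h$ via \leref{2.5} applied to $Z = V\otimes_S U$, with the two triangle identities collapsing to \equref{db12} and \equref{db13} after evaluating at $S$ and at $C$. The concluding claims are also handled exactly as in the paper, via uniqueness of adjoints and the dual-basis observation from \equref{db13}.
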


\begin{proof} $ (1) \Leftrightarrow (2) $ It follows from
\thref{3.2} that a representable functor ${}_{R}^C\mathcal{M} \to
{}_{S}\mathcal{M}$ is isomorphic to ${}_R^C {\rm Hom} (V, -)$, for
some $V \in {}_{R}^C{\mathcal M}_S$. Now, $V\ot_S - $ is a left
adjoint of ${}_R^C {\rm Hom} (V, -)$; hence the conclusion follows
from Khan's theorem of uniqueness of adjoints.

$ (2) \Leftrightarrow (3)$ Let $V \in {}_{R}^C{\mathcal M}_S$. We
shall prove that $V\otimes_S -$ is a left adjoint of  $U\otimes_R
-$ if and only if there exists $(e, h)$ a comodule dual basis of
first kind for $(U, V)$.

Indeed, for any natural transformation $\eta :
1_{{}_{S}\mathcal{M}} \to U\ot_R V \ot_S -$ there exists a unique
element $e = \sum e^1 \ot_R e^2 \in (U\ot_R V)^S$ such that
\begin{equation}\eqlabel{unit10}
\eta_N : N \to U\ot_R V \ot_S N, \quad \eta_N (n) = \sum e^1\ot_R
e^2 \ot_S n
\end{equation}
for all $N \in {}_{S}\mathcal{M}$ and $n\in N$. On the other hand,
if we apply \leref{2.5} for $Z := V\ot_S U$ we obtain: for any
natural transformation $\varepsilon : V\ot_S U\ot_R - \to
1_{{}_{R}^C\mathcal{M}}$ there exists a unique $h \in
{}_R^{\bullet}{\rm Hom}_R (V\ot_S U\ot_R C, \, R)$ such that
\begin{equation}\eqlabel{unit11}
\varepsilon_M : V\ot_S U \ot_R M \to M, \quad \varepsilon_M
(v\ot_S u \ot_R m) = h ( v\ot_S u \ot_R m_{<-1>})m_{<0>}
\end{equation}
for all $M \in {}_{R}^C\mathcal{M}$, $m\in M$, $v\in V$ and $u\in
U$. We note that \equref{db11} means that $h \in
{}_R^{\bullet}{\rm Hom}_R (V\ot_S U\ot_R C, \, R)$.

We shall prove that the above pair of natural transformations
$(\eta, \varepsilon)$ meets the condition of adjunction
\equref{1.6} if and only if \equref{db12} and \equref{db13} hold.
We denote $G = U\ot_R -$ and $F = V\ot_S -$. By a direct
calculation we have $G(\varepsilon_M)\circ\eta_{G(M)}= Id_{G(M)}$,
for all $M \in {}_{R}^C\mathcal{M}$ if and only if
\begin{equation}\eqlabel{369}
u\ot_R m = \sum e^1 h (e^2 \ot_S u \ot_R m_{<-1>} ) \ot_R m_{<0>}
\end{equation}
for all $M \in {}_{R}^C\mathcal{M}$, $m\in M$ and $u\in U$. Now,
\equref{369} follows from \equref{db12}. Conversely, if we
consider $M := C$ and apply $Id\ot_R \varepsilon$ to \equref{369}
we obtain \equref{db12}.

Finally, $\varepsilon_{F(N)}\circ F(\eta_N)= Id_{F(N)}$, for all
$N \in {}_{S}\mathcal{M}$ if and only if
$$
v \ot_S n = \sum h (v \ot_S e^1 \ot_R e^2_{<-1>} ) e^2_{<0>} \ot_S
n
$$
for all for all $N \in {}_{S}\mathcal{M}$,  $v\in V$, $n\in N$ and
this condition is obviously equivalent to \equref{db13}.
\end{proof}

\begin{corollary}\colabel{proC2}
Let $R$ be a ring, $C$ an $R$-coring. The following are
equivalent:
\begin{enumerate}
\item The forgetful functor $F : {}_{R}^C{\mathcal M} \to
{}_{R}{\mathcal M}$ is representable;

\item There exists $(V, e, h)$, where  $V \in {}_{R}^C{\mathcal
M}_R$, $e\in V^R$ and $h \in {}_R{\rm Hom}_R (V\otimes_R C, R)$,
such that
\begin{equation}\eqlabel{db112}
v_{<-1>} h (v_{<0>} \otimes_R c ) = h (v \otimes_R c_{(1)} )
c_{(2)}
\end{equation}
\begin{equation}\eqlabel{db122}
h ( e \otimes_R c ) = \varepsilon (c)
\end{equation}
\begin{equation}\eqlabel{db132}
h (v \otimes_R e_{<-1>}) e_{<0>} = v
\end{equation}
for all $v\in V$, $c\in C$.

\end{enumerate}
\end{corollary}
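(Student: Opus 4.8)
The plan is to deduce this corollary directly from \thref{3.4} by specializing to $U := S := R$. First I would observe that with this choice the induction functor $U\otimes_R - = R\otimes_R -$ is, via the canonical isomorphism $R\otimes_R M \cong M$, naturally isomorphic to the forgetful functor $F : {}_{R}^C{\mathcal M} \to {}_{R}{\mathcal M}$ (this is exactly the identification already recorded in \exref{exem}). Hence statement $(1)$ of the present corollary is equivalent to statement $(1)$ of \thref{3.4}, and it suffices to show that condition $(3)$ of \thref{3.4}, in the case $U = S = R$, is precisely the condition $(2)$ stated here.

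So the core of the argument is to unwind the notion of a comodule dual basis of first kind for the pair $(R, V)$ with $V \in {}_{R}^C{\mathcal M}_R$. Using the canonical isomorphisms $R\otimes_R V \cong V$ and $V\otimes_R R\otimes_R C \cong V\otimes_R C$ of bimodules (and, for the second, of $C$-bicomodules, the left $C$-comodule structure coming from $V$ as usual), the element $e = \sum e^1\otimes_R e^2 \in (R\otimes_R V)^R$ is identified with an element $e \in V^R$, and the map $h \in {}_R{\rm Hom}_R(V\otimes_R R\otimes_R C, R)$ is identified with a map $h \in {}_R{\rm Hom}_R(V\otimes_R C, R)$. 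Tracking these identifications, I would check that \equref{db11} becomes \equref{db112}, that \equref{db12} becomes \equref{db122} (using that it suffices to test it on the generator $u = 1_R$ of $U = R$), and that \equref{db13} becomes \equref{db132}. Each step is a routine rewriting; the only point requiring a little care is to confirm that the compatibility \equref{db11}, which is what guarantees that $h$ lies in ${}_R^{\bullet}{\rm Hom}_R(V\otimes_R R\otimes_R C, R)$, transports correctly to \equref{db112} under the canonical isomorphism, so that no datum is lost or spuriously added.

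Once this dictionary between the two packages of data is in place, the equivalence $(1)\Leftrightarrow(2)$ follows immediately from the equivalence $(1)\Leftrightarrow(3)$ of \thref{3.4}. I do not anticipate a genuine obstacle: the only thing to stay vigilant about is the bookkeeping of the side on which $R$ acts and keeping the left $C$-coaction on $V$ (respectively on $V\otimes_R C$) intact under the canonical isomorphisms. As a byproduct one also reads off from the last assertion of \thref{3.4} that, whenever $(2)$ holds, $V$ is finitely generated and projective as a left $R$-module, which in the light of the third Remark above connects this statement to \cite[Theorem 4.1]{Brz:cor}.
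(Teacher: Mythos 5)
Your proposal is correct and follows exactly the paper's own argument: specialize \thref{3.4} to $U = S = R$, identify $R\otimes_R -$ with the forgetful functor, and translate the comodule dual basis of first kind conditions for $(R,V)$ into \equref{db112}--\equref{db132} via the canonical isomorphisms. The paper's proof is just a terser version of the same reasoning.
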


\begin{proof}
We apply \thref{3.4} for $U = S = R$. In this case the induction
functor $R\ot_R -$ is isomorphic to the forgetful functor. The
conditions \equref{db112}, \equref{db122}, \equref{db132} mean
that $(e, h)$ is a comodule dual basis of first kind for $(R, V)$.
\end{proof}

\begin{corollary}\colabel{3.5}
Let $R$, $S$ be two rings, $C$ an $R$-coring, $U \in
{}_{S}\mathcal{M}_R$ and $V \in {}_{R}^C{\mathcal M}_S$. Assume
that there exists $ (e, h)$ a comodule dual basis of first kind
for $(U, V)$. Then:

\begin{enumerate}
\item The induction functor $ V \otimes_S - : {}_{S}{\mathcal M}
\to {}_{R}^C{\mathcal M}$ is separable if and only if there exists
$ E \in {}_S {\rm Hom}_S (U\otimes_R V, S) $ such that $E (e) =
1$.

\item The induction functor $U\otimes_R - : {}_{R}^C{\mathcal
M}\to {}_{S}{\mathcal M}$ is separable if and only if there exists
$p \in {}_R^C{\rm Hom}_R (C, V\otimes_S U)$ such that:
$$
h(p (c_{(1)}) \otimes_R c_{(2)}) = \varepsilon (c)
$$
for all $c\in C$.
\end{enumerate}
\end{corollary}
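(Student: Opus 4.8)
The plan is to reduce both statements, by means of \thref{3.4}, to the splitting criteria for separable functors recalled in \seref{1}. Since $(e,h)$ is a comodule dual basis of first kind for $(U,V)$, \thref{3.4} provides an adjunction $F := V\ot_S -\ \dashv\ G := U\ot_R -$ whose unit $\eta$ is given by \equref{unit10} and whose counit $\varepsilon$ is given by \equref{unit11}; here \equref{db11} is precisely what guarantees, via \leref{2.5}, that $\varepsilon$ is a natural transformation into $1_{{}_{R}^C\mathcal{M}}$. As $F$ is separable exactly when $\eta$ splits and $G$ is separable exactly when $\varepsilon$ cosplits, the task becomes to describe the natural transformations $GF\to 1_{{}_{S}\mathcal{M}}$ (for part $(1)$) and $1_{{}_{R}^C\mathcal{M}}\to FG$ (for part $(2)$) and to translate the corresponding (co)splitting identities into the stated conditions.

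For part $(1)$, $GF = U\ot_R V\ot_S -$ with $U\ot_R V\in{}_{S}\mathcal{M}_S$, so by the elementary description recalled just before \thref{3.4} (the trivial-coring instance of \leref{2.5}) every natural transformation $\nu: U\ot_R V\ot_S -\to 1_{{}_{S}\mathcal{M}}$ has the form $\nu_N(x\ot_S n) = E(x)n$ for a unique $E\in{}_S{\rm Hom}_S(U\ot_R V, S)$. Composing with $\eta$ gives $\nu_N\bigl(\eta_N(n)\bigr) = E(e)n$, so $\nu\circ\eta = {\rm Id}$ holds if and only if $E(e)=1$ (necessity by evaluating at $N:=S$, $n:=1_S$; sufficiency is immediate). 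This is exactly the assertion of $(1)$.

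For part $(2)$, $FG = V\ot_S U\ot_R - = Z\ot_R -$ with $Z := V\ot_S U\in{}_{R}^C{\mathcal M}_R$, so \leref{2.4} shows that every natural transformation $\xi: 1_{{}_{R}^C\mathcal{M}}\to FG$ is of the form $\xi_M(m) = p(m_{<-1>})\ot_R m_{<0>}$ for a unique $p\in{}_R^C{\rm Hom}_R(C, V\ot_S U)$. Substituting this into \equref{unit11} and using coassociativity of the coaction on $M$ yields
$$
\varepsilon_M\bigl(\xi_M(m)\bigr) = h\bigl(p((m_{<-1>})_{(1)})\ot_R (m_{<-1>})_{(2)}\bigr)m_{<0>} .
$$
If $h(p(c_{(1)})\ot_R c_{(2)}) = \varepsilon(c)$ for all $c\in C$, then the right-hand side becomes $\varepsilon(m_{<-1>})m_{<0>} = m$ by the counit axiom for the comodule $M$, so $\varepsilon\circ\xi = {\rm Id}$. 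Conversely, evaluating $\varepsilon_C\circ\xi_C = {\rm Id}_C$ at $c$ (take $M:=C$, $\rho:=\Delta$), then applying $\varepsilon_C$ and invoking the counit axiom for $C$ together with right $R$-linearity of $h$, one recovers $h(p(c_{(1)})\ot_R c_{(2)}) = \varepsilon(c)$. This gives $(2)$.

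I expect the only genuinely delicate step to be the Sweedler-index bookkeeping in part $(2)$: one has to verify that coassociativity of $\rho_M$ is what transfers the comultiplication of $C$ onto the argument of $p$, and that the reduction to $M = C$ followed by application of $\varepsilon_C$ is legitimate because $h$ is $R$-valued while $\varepsilon_C$ is an $R$-bimodule map. Everything else is a direct transcription of the separability criteria of \seref{1} through the adjunction of \thref{3.4}.
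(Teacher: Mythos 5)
Your proposal is correct and follows essentially the same route as the paper: invoke \thref{3.4} to obtain the adjunction $V\ot_S - \dashv U\ot_R -$ with unit \equref{unit10} and counit \equref{unit11}, parametrize the candidate (co)splitting natural transformations by $E \in {}_S{\rm Hom}_S(U\ot_R V, S)$ and $p \in {}_R^C{\rm Hom}_R(C, V\ot_S U)$ via the trivial-coring remark and \leref{2.4}, and translate the identities $\nu\circ\eta = {\rm Id}$ and $\varepsilon\circ\xi = {\rm Id}$ into $E(e)=1$ and $h(p(c_{(1)})\ot_R c_{(2)})=\varepsilon(c)$. The Sweedler-index reduction to $M=C$ in part $(2)$, including the use of right $R$-linearity of $h$, matches the paper's argument.
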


\begin{proof} With our assumptions $V\ot_S - $ is a left adjoint of
$U\ot_R -$ (\thref{3.4}) with the unit and counit given by
\equref{unit10} and \equref{unit11}.

$(1)$ Being a left adjoint, $V\ot_S - $ is a separable functor if
and only if the unit $\eta$ of the adjunction $V\ot_S - \dashv
U\ot_R -$ splits, that is there exists $\nu : U\ot_R \ot_S - \to
1_{{}_S {\mathcal M}}$ a natural transformation such that $\nu_N
\circ \eta_N = Id_N$ for all $N \in {}_S {\mathcal M}$. Such a
natural transformation $\nu$ is uniquely defined by a map $ E \in
{}_S {\rm Hom}_S (U\otimes_R V, S)$ via the formula $\nu_N (u\ot_R
v \ot_S n) = E (u\ot_R v)n$, for all $N \in {}_S {\mathcal M}$,
$u\in U$, $v\in V$ and $n\in N$. It is easy to see that $\nu_N$
splits $\eta_N$ if and only if $E (e) = 1_S$.

$(2)$ $U\ot_R -$ is a right adjoint: hence, it is separable if and
only if the counit $\varepsilon$ of the adjunction $V\ot_S -
\dashv U\ot_R -$ cosplits; that is there exists a natural
transformation $\xi : 1_{{}_{R}^C\mathcal{M}} \to V\ot_S U \ot_R
-$ such that $\varepsilon_M \circ \xi_M = Id_M$ for all $M\in
{}_{R}^C\mathcal{M}$. It follows from \leref{2.4} that such a
natural transformation $\xi$ is uniquely defined by a map $p \in
{}_R^C{\rm Hom}_R (C, \, V\ot_S U)$ such that
$$
\xi_M : M \to V\ot_S U\ot_R M, \quad \xi_M (m) = p (m_{<-1>})
\ot_R m_{<0>}
$$
for all $M \in {}_{R}^C\mathcal{M}$ and $m\in M$. Now, we can
prove directly that $\xi$ cosplits $\varepsilon$ if and only if
$$
h \Bigl( p (m_{<-1> (1)}) \ot_R m_{<-1> (2)} \Bigl) m_{<0>} = m
$$
for all $M \in {}_{R}^C\mathcal{M}$ and $m\in M$. This condition
is obviously equivalent (take $M = C$ and apply the counit of $C$
on the second position, the converse is trivial) to $h(p (c_{(1)})
\otimes_R c_{(2)}) = \varepsilon (c)$ for all $c\in C$.
\end{proof}

\begin{definition}\delabel{db2}
Let $R$, $S$ be two rings, $C$ an $R$-coring, $U \in
{}_{S}\mathcal{M}_R$ and $V \in {}_{R}^C{\mathcal M}_S$. A pair of
maps $(p, E)$, where $p \in {}_R^C {\rm Hom}_R (C, V\otimes_S U)$,
$E \in {}_S{\rm Hom}_S (U\otimes_R V, S)$ such that the following
diagrams
$$
\xymatrix { &V \ar[rr]^{\rho_{V}}\ar[d]_{\wr}
&{}\ar@{..}[r]
&C\otimes_{R}V\ar[d]^{p\otimes_{R}Id_{V}}\\
&V\otimes_{S}S &{}\ar@{..}[l]
&V\otimes_{S}U\otimes_{R}V\ar[ll]^{Id_{V}\otimes_{S}E} } \,
\xymatrix {
&U\otimes_{R}C\ar[rr]^{Id_{U}\otimes_{R}p}\ar[d]_{Id\otimes_{R}\varepsilon}
&{}\ar@{..}[r] &U\otimes_{R}V\otimes_{S}U\ar[d]^{E\otimes_{S}Id_{U}}\\
&U\otimes_{R}R\ar[r]^{\sim} &U\ar[r]^{\sim} &S\otimes_{S}U }
$$
are commutative is called a \textit{comodule dual basis of the
second kind} for $(U, V)$.
\end{definition}

\begin{examples}
1. Let $C: = R$ be the trivial coring, $V \in {}_{R}{\mathcal
M}_S$, $U: = V^* \in {}_{S}{\mathcal M}_R$ its right dual.
Consider the evaluation map
$$
E : V^* \otimes_R V \to S, \quad E (v^* \otimes_R v) = <v^*, v>
$$
Then there exists $p \in {}_R {\rm Hom}_R (R, V\otimes_S V^*)$
such that $(p, E)$ is a comodule dual basis of the second kind for
$(V^*, V)$ if and only if $V$ is finitely generated and projective
as a right $S$-module.

2. Let $U = S = R$ and $V = C$. Then $(Id_C, \varepsilon_C)$ is a
comodule dual basis of the second kind for $(R, C)$.
\end{examples}

The reverse side of the adjunction of the same induction functors
is also interesting:

\begin{theorem}\thlabel{3.6}
Let $R$, $S$ be two rings, $C$ an $R$-coring, $U \in
{}_{S}\mathcal{M}_R$ and $V \in {}_{R}^C{\mathcal M}_S$. The
following are equivalent:

\begin{enumerate}
\item The induction functor $U\otimes_R - : {}_{R}^C{\mathcal
M}\to {}_{S}{\mathcal M}$ is a left adjoint of $ V \otimes_S - :
{}_{S}{\mathcal M} \to {}_{R}^C{\mathcal M}$; \vspace{3mm}

\item There exists $(p, E)$ a comodule dual basis of the second
kind for $(U, V)$.
\end{enumerate}
\end{theorem}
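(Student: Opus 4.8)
The plan is to mimic the proof of \thref{3.4}, with the roles of unit and counit interchanged. Put $F := U\otimes_R - : {}_{R}^C\mathcal{M}\to{}_{S}\mathcal{M}$ and $G := V\otimes_S - : {}_{S}\mathcal{M}\to{}_{R}^C\mathcal{M}$; the problem is to decide when $F$ is a left adjoint of $G$. Since $GF = (V\otimes_S U)\otimes_R -$ is an endofunctor of ${}_{R}^C\mathcal{M}$ and $FG = (U\otimes_R V)\otimes_S -$ is an endofunctor of ${}_{S}\mathcal{M}$, a unit of a possible adjunction $F\dashv G$ is a natural transformation $\eta : 1_{{}_{R}^C\mathcal{M}}\to (V\otimes_S U)\otimes_R -$ and a counit is a natural transformation $\varepsilon : (U\otimes_R V)\otimes_S -\to 1_{{}_{S}\mathcal{M}}$.

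First I would parametrize all such $\eta$ and $\varepsilon$. Since $Z := V\otimes_S U\in{}_{R}^C\mathcal{M}_R$, \leref{2.4} applied to $Z$ gives that every $\eta$ has the form $\eta_M(m) = p(m_{<-1>})\otimes_R m_{<0>}$ for a unique $p\in {}_R^C{\rm Hom}_R(C, V\otimes_S U)$. Dually, by the module-theoretic version of \leref{2.5} (one of the two elementary facts recalled just before \thref{3.4}, i.e. the case $C=R$), every $\varepsilon$ has the form $\varepsilon_N(u\otimes_R v\otimes_S n) = E(u\otimes_R v)\,n$ for a unique $E\in {}_S{\rm Hom}_S(U\otimes_R V, S)$. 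Thus a choice of $(\eta,\varepsilon)$ is exactly a choice of a pair $(p,E)$ of the kind that appears in \deref{db2}.

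Next I would substitute the pair $(\eta,\varepsilon)$ attached to $(p,E)$ into the two triangle identities \equref{1.6}. Unwinding the first, $G(\varepsilon_N)\circ\eta_{G(N)} = {\rm Id}_{V\otimes_S N}$, one checks that this composite equals $\bigl(({\rm Id}_V\otimes_S E)\circ(p\otimes_R{\rm Id}_V)\circ\rho_V\bigr)\otimes_S{\rm Id}_N$ (after identifying $V\cong V\otimes_S S$); hence it is the identity for all $N\in{}_{S}\mathcal{M}$ iff it is for $N = S$, which is precisely commutativity of the first square of \deref{db2}. Unwinding the second, $\varepsilon_{F(M)}\circ F(\eta_M) = {\rm Id}_{U\otimes_R M}$, one finds that it sends $u\otimes_R m$ to $(E\otimes_S{\rm Id}_U)\bigl(u\otimes_R p(m_{<-1>})\bigr)\otimes_R m_{<0>}$. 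Taking $M = C$ and postcomposing with ${\rm Id}_U\otimes_R\varepsilon_C$ — where right $R$-linearity of $p$ lets one collapse the $\varepsilon_C$ applied to the second leg of $\Delta$ — reduces this to commutativity of the second square of \deref{db2}; conversely, tensoring that square with $M$ and precomposing with ${\rm Id}_U\otimes_R\rho_M$ recovers the full identity for every $M$ via the counit axiom for the comodule $M$. Putting the two equivalences together: $U\otimes_R -$ is a left adjoint of $V\otimes_S -$ iff there is a pair $(p,E)$ making both squares of \deref{db2} commute, i.e. iff there exists a comodule dual basis of the second kind for $(U,V)$.

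The argument uses no technical input beyond \leref{2.4} and the module version of \leref{2.5}, so the only real obstacle is the bookkeeping inside the two triangle identities: keeping track of the canonical isomorphisms $V\cong V\otimes_S S$ and $U\cong S\otimes_S U\cong U\otimes_R R$ and of which tensor factor carries each of the $R$- and $S$-actions when one unwinds $G(\varepsilon_N)\circ\eta_{G(N)}$ and $\varepsilon_{F(M)}\circ F(\eta_M)$, and matching the resulting two identities with the two squares of \deref{db2} in the correct order.
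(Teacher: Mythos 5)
Your proposal is correct and follows essentially the same route as the paper: parametrize the unit via \leref{2.4} applied to $Z = V\otimes_S U$ and the counit via the module-case fact recalled before \thref{3.4}, then translate the two triangle identities into the two squares of \deref{db2} by evaluating at $N=S$ and at $M=C$ (composed with $\varepsilon_C$), respectively. The reductions you identify are exactly the ones used in the paper's proof.
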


\begin{proof} It follows from \leref{2.4} for $Z = V\ot_S U$ that
a natural transformation $\eta : 1_{{}_{R}^C\mathcal{M}} \to
V\ot_S U \ot_R -$ is uniquely defined by a map $p \in {}_R^C{\rm
Hom}_R (C, \, V\ot_S U)$ such that
\begin{equation}\eqlabel{328}
\eta_M : M \to V\ot_S U\ot_R M, \quad \eta_M (m) = p (m_{<-1>})
\ot_R m_{<0>}
\end{equation}
for all $M \in {}_{R}^C\mathcal{M}$ and $m\in M$.

A natural transformation $\varepsilon : U\ot_R \ot_S - \to 1_{{}_S
{\mathcal M}}$ is uniquely defined by a map $ E \in {}_S {\rm
Hom}_S (U\otimes_R V, S)$ such that
\begin{equation}\eqlabel{329}
\varepsilon_N (u\ot_R v \ot_S n) = E (u\ot_R v)n
\end{equation}
for all $N \in {}_S {\mathcal M}$, $u\in U$, $v\in V$ and $n\in
N$.

Now we shall prove that $(\eta, \varepsilon)$ given by
\equref{328} and \equref{329} fulfill the condition of adjunction
\equref{1.6} if and only if the pair of maps $(p, E)$ that defines
the natural transformations $\eta$ and $\varepsilon$ is a comodule
dual basis of the second kind for $(U, V)$. We denote $F = U\ot_R
-$ and $G = V\ot_S -$ and we shall adopt the notation $p (c) =
\sum p(c)^V \ot p(c)^U \in V\ot_S U$, for all $c\in C$.

By a direct calculation we have $G(\varepsilon_N)\circ\eta_{G(N)}=
Id_{G(N)}$, for all $N \in {}_{S}\mathcal{M}$ if and only if
\begin{equation}\eqlabel{3690}
v\ot_S n = \sum  p(v_{<-1>})^V \, E \Bigl( p(v_{<-1>})^U \ot_R
v_{<0>} \Bigl) \ot_S n
\end{equation}
for all $N \in {}_{S}\mathcal{M}$, $n\in N$ and $v\in V$. Now,
\equref{3690} is equivalent (take $N= S$, $n = 1_S$) to the fact
that the left diagram of \deref{db2} is commutative.

On the other hand $\varepsilon_{F(M)}\circ F(\eta_M)= Id_{F(M)}$,
for all $M \in {}_{R}^C\mathcal{M}$ if and only if
\begin{equation}\eqlabel{3691}
u\ot_R m = \sum  E \Bigl (u \ot_R  p(m_{<-1>})^V  \bigl) \,
p(v_{<-1>})^U  \ot_R m_{<0>}
\end{equation}
for all $M \in {}_{R}^C\mathcal{M}$, $u \in U$ and $m \in M$. Now,
\equref{3691} is equivalent to the fact that the right diagram of
\deref{db2} is commutative. Indeed, if we take $M = C$ and $m =
c\in C$ and apply $\varepsilon_C$ to \equref{3691} we obtain the
commutativity of the diagram. The converse is straightforward.
\end{proof}

The fact that the forgetful functor $F : {}_{R}^C\mathcal{M} \to
{}_{R}\mathcal{M}$ has a right adjoint \cite[Lemma 3.1]{Brz:cor}
is a special case of \thref{3.6} as $(Id_C, \varepsilon_C)$ is a
comodule dual basis of the second kind for $(R, C)$. Moreover, the
following Corollary is a generalization of \cite[Theorem 3.3 and
  Theorem 3.5]{Brz:cor} which are obtained if
we consider $U := S := R$ and $V := C$ taking into account that
$(Id_C, \varepsilon_C)$ is a comodule dual basis of the second
kind for $(R, C)$.

\begin{corollary}\colabel{3.7}
Let $R$, $S$ be two rings, $C$ an $R$-coring, $U \in
{}_{S}\mathcal{M}_R$ and $V \in {}_{R}^C{\mathcal M}_S$. Assume
that there exists $ (p, E)$ a comodule dual basis of the second
kind for $(U, V)$. Then:

\begin{enumerate}
\item The induction functor $ V \otimes_S - : {}_{S}{\mathcal M}
\to {}_{R}^C{\mathcal M}$ is separable if and only if there exists
an element $ e \in (U\otimes_R V)^S$ such that $E (e) = 1$.

\item The induction functor $U\otimes_R - : {}_{R}^C{\mathcal
M}\to {}_{S}{\mathcal M}$ is separable if and only if there exists
$h \in {}_R{\rm Hom}_R (V\otimes_S U\otimes_R C, R)$ s.t.:
$$v_{<-1>} h ( v_{<0>} \otimes_S u \otimes_R c ) = h (v \otimes_S u
\otimes_R c_{(1)} ) c_{(2)}$$
$$
h \bigl( p(c_{(1)}) \otimes_R c_{(2)} \bigl) = \varepsilon(c)
$$
for all $v\in V$, $u\in U$, $c\in C$.
\end{enumerate}
\end{corollary}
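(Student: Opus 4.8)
The assumption that $(p,E)$ is a comodule dual basis of the second kind for $(U,V)$ means, by \thref{3.6}, that $U\otimes_R -\colon {}_{R}^C\mathcal{M}\to {}_{S}\mathcal{M}$ is a left adjoint of $V\otimes_S -\colon {}_{S}\mathcal{M}\to {}_{R}^C\mathcal{M}$, with unit $\eta$ and counit $\varepsilon$ exactly those of \equref{328} and \equref{329}, encoded by $p$ and $E$ respectively. So the plan is to invoke the separability criteria recalled in \seref{1}: a left adjoint is separable iff its unit $\eta$ splits, and a right adjoint is separable iff its counit $\varepsilon$ cosplits; in each case the splitting (resp.\ cosplitting) natural transformation is computed explicitly by the Yoneda-type descriptions of \seref{2}. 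The two items of the corollary are then the two sides of this one adjunction, and the argument is the exact mirror of the proof of \coref{3.5}.

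For $(1)$: here $V\otimes_S -$ is the \emph{right} adjoint, so it is separable iff $\varepsilon$ cosplits, i.e.\ there is a natural transformation $\xi\colon 1_{{}_{S}\mathcal{M}}\to U\otimes_R V\otimes_S -$ with $\varepsilon_N\circ\xi_N={\rm Id}_N$ for all $N$. By the module-case computation recalled just before \thref{3.4}, applied with $X:=U\otimes_R V\in {}_{S}\mathcal{M}_S$, such a $\xi$ corresponds to a unique $e\in (U\otimes_R V)^S$ via $\xi_N(n)=e\otimes_S n$. Substituting into the cosplitting identity and using \equref{329} gives $\varepsilon_N(\xi_N(n))=E(e)\,n$, which equals $n$ for all $N$ and all $n$ (take $N=S$, $n=1_S$) if and only if $E(e)=1_S$. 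This proves $(1)$.

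For $(2)$: here $U\otimes_R -$ is the \emph{left} adjoint, so it is separable iff $\eta$ splits, i.e.\ there is a natural transformation $\nu\colon V\otimes_S U\otimes_R -\to 1_{{}_{R}^C\mathcal{M}}$ with $\nu_M\circ\eta_M={\rm Id}_M$ for all $M\in {}_{R}^C\mathcal{M}$. Applying \leref{2.5} with $Z:=V\otimes_S U\in {}_{R}^C\mathcal{M}_R$, such a $\nu$ is given by a unique $h\in {}_R^{\bullet}{\rm Hom}_R(V\otimes_S U\otimes_R C,\,R)$ as in \equref{241}, and the membership $h\in {}_R^{\bullet}{\rm Hom}_R(V\otimes_S U\otimes_R C,\,R)$ is precisely the first displayed compatibility in $(2)$. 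It then remains to translate $\nu_M\circ\eta_M={\rm Id}_M$: combining \equref{328} with \equref{241} and using coassociativity of the coaction of $M$, this becomes
$$
h\bigl(p(m_{<-1>(1)})\otimes_R m_{<-1>(2)}\bigr)\,m_{<0>}=m
$$
for all $M\in {}_{R}^C\mathcal{M}$ and $m\in M$. Specializing $M:=C$ with coaction $\Delta$ and applying $\varepsilon_C$ gives $h\bigl(p(c_{(1)})\otimes_R c_{(2)}\bigr)=\varepsilon(c)$; conversely, this identity together with the counit axiom for $M$ recovers the general one. This proves $(2)$.

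I expect no real obstacle here: everything reduces to the adjunction of \thref{3.6} and the parametrizations of \leref{2.5} and the recalled module-case results. The only point needing care — and it is the same point as at the end of the proof of \coref{3.5} — is the Sweedler-calculus bookkeeping in $(2)$: evaluating $\nu_M$ on $\eta_M(m)$ produces $m_{<0><-1>}$ inside $h$, which must be rewritten via coassociativity as $m_{<-1>(2)}$ before the reduction to $M=C$ and the application of $\varepsilon_C$ can be carried out cleanly.
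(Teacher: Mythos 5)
Your proposal is correct and follows exactly the route the paper intends: it invokes the adjunction $U\otimes_R - \dashv V\otimes_S -$ from \thref{3.6} with unit and counit given by \equref{328} and \equref{329}, then applies the split/cosplit separability criteria, parametrizing the (co)splitting transformations via the recalled module-case result and \leref{2.5} respectively, mirroring the proof of \coref{3.5}. The paper's own proof is just a two-line reduction to these same ingredients, so your write-up simply supplies the details it leaves to the reader, including the coassociativity bookkeeping in part (2).
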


\begin{proof}
With our assumptions, $U\ot_R - $ is a left adjoint of $V\ot_S -$
(\thref{3.6}) with the unit and counit given by \equref{328} and
\equref{329}. Using \leref{2.5} the proof follows similarly to the
one of \coref{3.5}.
\end{proof}

\begin{remark}
In general, the separability of an induction functor $ V \otimes_S
- : {}_{S}{\mathcal M} \to {}_{R}^C{\mathcal M}$  is still an open
problem even for the category of modules, i.e. for the trivial
coring $C := R$. \cite[Corollary 5.11]{CaenepeelKadison} solved
the problem only for finitely generated and projective modules,
that is in the case that the induction functor is representable.
All four statements of \coref{3.5} and \coref{3.7} generalize
their result.
\end{remark}

\end{document}